\newtheorem{theorem}{Theorem}[section]
\newtheorem{proposition}[theorem]{Proposition}
\newtheorem{corollary}[theorem]{Corollary}
\newtheorem{lemma}[theorem]{Lemma}
\newtheorem{example}[theorem]{Example}
\def\qed{\hfill $\Box$\medskip}
\def\diag{{\rm diag}\,}
\def\Span{\mathop{\rm Span}\,}
\def\Re{{\rm Re}}
\def\IR{{\mathbb{R}}}
\def\PP{\mathbb{P}}
\def\RR{\mathbb{R}}
\def\KK{{\mathbb{K}}}
\def\HH{{\mathbb{H}}}
\def\FF{{\mathbb F}}
\def\CC{{\mathbb C}}
\def\Span{\mathop{\mathrm{Span}}}
\def\RR{{\mathbb R}}
\begin{document}
\openup 1\jot

\title{Non-linear classification of finite-dimensional simple $C^*$-algebras}

\author[B. Kuzma]{Bojan Kuzma$^{1}$}
\address{$^{a}$University of Primorska, Glagolja\v{s}ka 8, SI-6000 Koper, Slovenia, and
$^{b}$Institute of Mathematics, Physics, and Mechanics, Jadranska 19, SI-1000 Ljubljana, Slovenia.}
 \email{bojan.kuzma@upr.si}
\thanks{$^{1,2}$This work is supported in part by the Slovenian Research Agency (research program P1-0285 and research projects N1-0210, N1-0296 and J1-50000).}

\author[S. Singla]{Sushil Singla$^{2}$}
\address{$^{2}$University of Primorska, Glagolja\v{s}ka 8, SI-6000 Koper, Slovenia,}
 \email{ss774@snu.edu.in}

\keywords{Real $C^*$-algebras; finite-dimensional simple $C^*$-algebra; Banach space; non-linear classification; singular value decomposition}
\subjclass{Primary 46L05; Secondary 46B20, 46B80}

\begin{abstract}
    A Banach space characterization of simple real or complex $C^*$-algebras is given which even characterizes the underlying field. As an application, it is shown that if $\mathfrak A_1$ and $\mathfrak A_2$ are Birkhoff-James isomorphic simple $C^*$-algebras over the fields $\mathbb F_1$ and $\mathbb F_2$, respectively and if $\mathfrak A_1$ is finite-dimensional with dimension greater than one, then $\mathbb F_1=\mathbb F_2$ and $\mathfrak A_1$ and $\mathfrak A_2$ are (isometrically) $\ast$-isomorphic $C^*$-algebras. 
\end{abstract}

\maketitle

\section{Introduction}

%Let $\mathbb F$ denote the field of real, $\mathbb R$,  or of complex, $\mathbb C$, numbers. 
%Recall that a $*$-algebra is an algebra over the underlying field~$\mathbb F$ equipped with an involution.
Recall that on a given $\ast$-algebra over the underlying field~$\mathbb F\in\{{\mathbb R}, {\mathbb C}\}$  there can exist only one norm that makes it into a $C^*$-algebra: namely,  the norm of any element $x$ in a $C^*$-algebra is the square root of the spectral radius of $x^*x$  (for more details we refer to \cite[Theorem 4.1]{goodearl} for $\mathbb F=\mathbb C$ and \cite[Theorem 11.1]{goodearl} for $\mathbb F=\mathbb R$). 
Thus, knowing the algebraic structure of a $C^*$-algebra determines its norm and hence determines the $C^*$-algebra completely. 

We can reverse the narrative and ask the following question. Does knowing the Banach space structure of a $C^*$-algebra characterizes its algebraic structure and hence the $C^*$-algebra completely? For example, Kadison \cite{kadison} proved that a linear surjective isometry $\Phi$ between two unital complex $C^*$-algebras is nothing but a Jordan $\ast$-isomorphism multiplied by a fixed unitary element,~$\Phi(I)$. 
This shows that the unit sphere of one unital complex $C^\ast$-algebra $\mathfrak B_1$ is linearly mapped onto the unit sphere of another unital complex $C^*$-algebra $\mathfrak B_2$ if and only if $\mathfrak B_1$ and $\mathfrak B_2$ are Jordan $\ast$-isomorphic up to multiplication by a unitary element. Tanaka \cite{Tanaka} recently considered this further and gave a characterization of abelian complex $C^*$-algebras among all complex $C^*$-algebras in terms of \textit{geometric structure} alone and without assorting to the linearity. In fact, for a maximal face $F$ of the unit ball of a complex $C^*$-algebra $\mathfrak B$, the union of all its supporting hyperplanes, transferred to the origin, % of $F$ 
was denoted by $I_F$. Then, the geometric structure of $\mathfrak B$, denoted by $\mathfrak S(\mathfrak B)$, was (equivalently, see~\cite[Theorem 4.15]{tanaka1}) defined  as the collections of all $I_F$ for all maximal faces of the unit ball of $\mathfrak B$, i.e. $$\mathfrak S(\mathfrak B) =\{I_F;\;\; F\text{ is a maximal face of unit ball of } \mathfrak B\}.$$ 
It was proved in \cite[Theorem 3.5]{Tanaka} that a complex $C^*$-algebra $\mathfrak B$ is abelian if and only if the set $\mathfrak C(\mathfrak B) = \{S\subset \mathfrak S(\mathfrak B);\;\; S=S^{=}\}$ satisfies the axioms of closed sets, where $S^{=} = \{I\in \mathfrak S(\mathfrak B);\;\; \cap_{J\in S}J\subset I\}\text{ for all } S\subseteq \mathfrak S(\mathfrak B).$ Moreover, we have been informed by Tanaka that he also obtained a classification of finite-dimensional complex $C^\ast$-algebras in terms of geometric structure~\cite{tanaka2}. The notion of geometric structure space was initially defined in terms of \textit{Birkhoff-James orthogonality} in \cite{tanaka1}. In this article, we will use the notion of Birkhoff-James orthogonality directly to prove our main results. As an application of our results, we will  be able to tell if a complex $C^\ast$ algebra is finite-dimensional  or not and if it is, whether it is simple or not  (see Remark (C) below for more details). 

In \cite{Tanaka, tanaka2}, the author was working with complex $C^*$-algebras. Note that $C^*$-algebras can also be defined over real numbers as the underlying field: a real $C^*$-algebra $\mathfrak A$ is a real Banach algebra with an involution that satisfies the $C^*$-identity ($\|a^*a\|=\|a\|^2$ for all $a$) as well as  the condition that $1+a^*a$ is invertible in its unitization, see \cite{goodearl}. The second condition is equivalent to  the spectrum of $a^*a$ being contained in non-negative real numbers  and is  automatic for complex $C^*$-algebras -- this was first observed by Irving Kaplansky by using results of Fukamiya \cite{Fukamiya}. But the work of Kaplansky was unpublished and his argument was recorded in Joseph A.  Schatz's review of \cite{Fukamiya} in the Mathematics Reviews. However, it is not automatic for real $C^*$-algebras. For example, the real algebra of complex numbers, $\mathbb C$, with standard norm and identity map as involution satisfies all other assumptions of a real $C^*$-algebra, except the fact that $1+i^*i$ is not invertible. This extra condition of invertibility of $1+a^*a$ in unitization is necessary to have an analogue of GNS construction for real $C^*$-algebra which states that up to $*$-isomorphisms all real $C^*$-algebras are real $\ast$-subalgebras of the space of bounded operators on a real Hilbert space (see \cite[Theorem 15.3]{goodearl} for more details). It turns out that  a Banach $\ast$-algebra $\mathfrak A$ is a real $C^*$-algebra if and only if a  complexification $\mathfrak A_c=\mathfrak A+i\mathfrak A=\mathfrak A\otimes_{\IR}\CC$ of $\mathfrak A$ can be equipped with a norm, extending the original norm of $\mathfrak A$, so that $\mathfrak A_c$ is a $C^*$-algebra (see \cite[Theorem 15.4]{goodearl}). Yet another equivalent definition of a real $C^*$-algebra $\mathfrak A$ is $\|a\|^2\leq \|a^*a+b^*b\|$ for all $a,b\in\mathfrak A$, see \cite[Theorem 1]{palmer}, and see \cite{DorenBelfi} 
 or~\cite{Schroder} for even more equivalent definitions.
%Another interesting fact about the finite-dimensional real $C^*$-algebras is that their complexification is a complex $C^*$-algebra, only when it is equipped with Taylor minimal complexified norm. We will give an outline of the proof of this fact in Remark \ref{remark:complexification}. 

Our main theorem of this article will give a procedure to classify the algebraic structure of finite-dimensional simple $C^*$-algebras over $\mathbb F$,  using only the  Birkhoff-James orthogonality relation. As a special case of our theorem, we get the following. If $\mathfrak A_1$ and $\mathfrak A_2$ are simple $C^*$-algebras over $\mathbb F_1$ and $\mathbb F_2$ respectively, (nonlinearly) isomorphic  with respect to the structure of Birkhoff-James orthogonality (that is, if there exists a bijection $\Phi$ such that $x$ is Birkhoff-James orthogonal to $y$ if and only if $\Phi(x)$ is Birkhoff-James orthogonal to $\Phi(y)$), and if either $\mathfrak A_1$ or $\mathfrak A_2$ is finite-dimensional with dimension greater than one, then $\mathbb F_1=\mathbb F_2$ and the two $C^*$-algebras are (isometrically) $*$-isomorphic. In Section \ref{section2}, we state our main results and in Section \ref{section3} we prove them. We end our article with some concluding remarks in Section \ref{section4}.

\section{Preliminaries and statements of the main results}\label{section2}

Throughout,  $\mathbb {\mathcal M}_n(\mathbb K)$ will denote  the set  of $n$-by-$n$ matrices over the real finite-dimensional division algebra 
$\mathbb K=\mathbb R$ (reals), $\mathbb K=\mathbb C$ (complexes) or $\mathbb K=\mathbb H$ (quaternions). A matrix from  $\mathbb {\mathcal M}_n(\mathbb K)$ acts on  $\mathbb K^n$, the $n$-dimensional right vector space of column vectors (that is, $n$-by-$1$ matrices) over $\mathbb K$ by left multiplication and induces a $\mathbb K$-linear mapping in ${\mathbb K}^n$ (if $\mathbb K \in\{\mathbb R, \mathbb C\}$, then ${\mathbb K}^n$ is same as left vector space; also, if $\lambda \in{\mathbb R}$, then $x\lambda=\lambda x$ for $x\in{\mathbb K}^n$). For $ v\in\mathbb K^n$, let  $v^*$ denote the row vector obtained by  the conjugate transpose  of $v$. Notice that  $uv^*$  ($u,v\in\mathbb K^n$) is then a matrix of (column) rank at most one. (Remark that quaternionic matrices may have different column and row ranks).  The norm of $x\in\KK^n$ is defined by $\sqrt{x^\ast x}=\sqrt{\Re(x^\ast x)}$; notice that $\langle x,y\rangle:=\Re(x^\ast y)$ is an inner product on $\KK^n$ when considered as an $(dn)$-dimensional real vector space ($d:=\dim_{\RR}\KK$). Then, the norm of a matrix $A$ is  the  operator norm, induced from its action of ${\mathbb K}^n$. This way, $(\mathbb {\mathcal M}_n(\mathbb K),\|\cdot\|)$ becomes a (simple, finite-dimensional) $C^*$-algebra.

%We will use the same symbol $\|\cdot\|$ for the $C^*$-norm on $\mathbb {\mathcal M}_n(\mathbb K)$ considered as a $C^*$-algebra over the field $\mathbb R$ as well as on $\mathbb {\mathcal M}_n(\mathbb C)$ considered as a $C^*$-algebra  over the field $\mathbb C$. 
We note that $\mathbb {\mathcal M}_n(\mathbb K)$ are the basic building blocks for the finite-dimensional $C^*$-algebras: any complex finite-dimensional $C^*$-algebra $\mathfrak A$ is $\ast$-isomorphic to $\mathbb {\mathcal M}_{n_1}(\mathbb C)\oplus\dots\oplus\mathbb {\mathcal M}_{n_\ell}(\mathbb C)$ for some positive integers $n_1$, \dots, $n_\ell$ (see  \cite[Theorem 1.5]{goodearl}) and any real finite-dimensional $C^*$-algebra $\mathfrak A$ is $\ast$-isomorphic to $\mathbb {\mathcal M}_{n_1}(\mathbb K_1)\oplus\dots\oplus\mathbb {\mathcal M}_{n_\ell}(\mathbb K_\ell)$ where $\mathbb K_i\in\{\mathbb R, \mathbb C, \mathbb H\}$ 
%with real dimension $d_1,\dots, d_\ell$ 
(see  \cite[Theorem 8.4]{goodearl}). 
%For a complex $C^*$-algebra we set $n(\mathfrak A) = (n_1, \dots, n_\ell)$, and for a real $C^*$-algebra we set $n(\mathfrak A)=((n_1, d_1), \dots, (n_\ell, d_\ell))$, where $d_i=\dim_{\mathbb R}\mathbb K_i$. We note that $n(\mathfrak A)$ (up to permutation) uniquely determines $\mathfrak A$ (using Wedderburn–Artin theorem). 
We will refer to $\mathbb {\mathcal M}_{n_1}(\mathbb K_1)\oplus\dots\oplus\mathbb {\mathcal M}_{n_\ell}(\mathbb K_\ell)$ as the block decomposition of~$\mathfrak A$. By Wedderburn–Artin theorem, this block decomposition is unique up to the permutation of blocks. Moreover, 
%Using Wedderburn–Artin theorem, we also get that 
all finite-dimensional simple complex $C^*$-algebras are isomorphic to $\mathbb {\mathcal M}_n(\mathbb C)$ and all finite-dimensional simple real $C^*$-algebras are isomorphic to $\mathbb {\mathcal M}_n(\mathbb R)$, $\mathbb {\mathcal M}_n(\mathbb C)$, or  $\mathbb {\mathcal M}_n(\mathbb K)$.

Let $(V,\|\cdot\|)$ be a normed space over the field $\FF$ and let $x,y\in V$. We say that $x$ is Birkhoff-James (shortly BJ) orthogonal to $y$, and denote it by $x\perp y$, if $$\|x\|\leq \|x+\lambda y\|\text{ for all }\lambda\in \mathbb F.$$
%We will use notation $x\perp y$ to stand for $x$ is BJ orthogonal to $y$.
Note that $x\perp y$ does not imply $y\perp x$. %We say 
Two normed spaces $V_1$ and $V_2$ are BJ isomorphic if there exists a bijection $\Phi\colon V_1\to V_2$ such that $x\perp y$ if and only if $\Phi(x)\perp \Phi(y)$. The easiest way to study this is by  associating  a directed graph, $\Gamma_0=\Gamma_0(V)$,  (which we call  a \textit{spatial ortho-digraph}) to  every normed space $V$. Its  vertices are all the elements of $V$ and two vertices $x,y$ form a directed edge $x\rightarrow y$ if $x\perp y$. 
Note that $0\perp x$ and $x\perp 0$ for every $x\in V$ while $x\not\perp x$ if $x\neq 0$. So,  $0\in \Gamma_0(V)$ is the only vertex connected to all vertices and also the only vertex with a loop. Moreover, $V_1$ and $V_2$ are BJ isomorphic if and only if $\Gamma_0(V_1)$ and $\Gamma_0(V_2)$ are isomorphic as digraphs. Since BJ orthogonality relation is homogeneous in the sense that $x\perp y$ if and only if $(\lambda x)\perp (\mu y)$ for every $\lambda,\mu\in\FF$, there is naturally another digraph, $\Gamma=\Gamma(V)$ associated with $V$, called the \textit{projective ortho-digraph} (termed also ortho-digraph in our previous paper~\cite{Aram}, where we investigated only projective versions). 
%The ortho-digraph $\Gamma=\Gamma(V)$ is a digraph over a projective space $\PP(V)$; i
Its vertices are all  one-dimensional subspaces $[x]=\FF x$ for $x\in V\setminus\{0\}$ (that is, all points in a projective space~$\PP(V)$) and two vertices $[x],[y]$ form a directed edge if some 
(hence any) representative $x\in[x]$ is BJ orthogonal to some (hence any) representative  $y\in[y]$.  Unlike $\Gamma_0$  the vertices  of~$\Gamma$ have no loops and $\Gamma$ can be obtained from $\Gamma_0$ by removing its only loop vertex and contracting, in the remaining graph, vertices which correspond to linearly dependent vectors.  We note that for two normed spaces $V_1$ and $V_2$, the isomorphism between $\Gamma(V_1)$ and $\Gamma(V_2)$ induces the isomorphism between $\Gamma_0(V_1)$ and $\Gamma_0(V_2)$ (see \cite[Theorem 2.5]{tanaka3} and \cite[Lemma 3.1]{kuzma1}). The converse, however, is false in general:  the isomorphism between $\Gamma_0(V_1)$ and $\Gamma_0(V_2)$ may not induce the isomorphism between $\Gamma(V_1)$ and $\Gamma(V_2)$ (see \cite[Example 3.16]{tanaka3} and \cite[Examples 3.13, 3.17]{kuzma1}). 

In \cite{tanaka3}, yet another associated directed graph, the {\it reduced ortho-digraph} $\hat{\Gamma}_0(V)$ was defined to study BJ isomorphism between two normed spaces. To define it one first introduces for each vertex  $x\in\Gamma_0(V)$ its \textit{incoming} and its \textit{outgoing neighborhood}, by
$${}^\bot x:=\{z\in\Gamma_0(V);\;\;z\rightarrow x\}\quad\hbox{ and }\quad x^\bot :=\{z\in\Gamma_0(V);\;\;x\rightarrow z\}$$
and then defines 
an equivalence relation on $V$ by declaring that $x\sim y$ if  ${}^\bot x={}^
\bot y$ and  $x^\bot=y^\bot$.
%they share the same \textit{incoming} and \textit{outgoing neighborhood}, that is, if $$\{z\in\Gamma_0(V);\;\;z\perp x\}=\{z\in\Gamma_0(V);\;\;z\perp y\}$$ and $\{z\in\Gamma_0(V);\;\;x\perp z\}=\{z\in\Gamma_0(V);\;\;y\perp z\}$. 
The reduced ortho-digraph $\hat{\Gamma}_0(V)$ of $V$ is then the quotient of $\Gamma_0(V)$, that is, its  vertices are $X/\sim$ and  vertices $[x]$, $[y]$ form a directed edge if some (hence any) representative $x\in[x]$ is BJ orthogonal to some (hence any) representative  $y\in[y]$. For two normed spaces $V_1$ and $V_2$, the isomorphism between $\Gamma_0(V_1)$ and $\Gamma_0(V_2)$ implies the isomorphism between $\hat{\Gamma}_0(V_1)$ and $\hat{\Gamma}_0(V_2)$ (see \cite[Theorem 2.6]{tanaka3}), but it is unknown whether the converse implication holds or not.

We will  use the symbol $\boldsymbol{\Gamma}=\boldsymbol{\Gamma}(V)$ to stand for $\Gamma_0$, $\Gamma$ or  $\hat{\Gamma}_0$. The incoming and outgoing neighborhood of a vertex  $[x]\in \boldsymbol{\Gamma}(V)$ are defined as before for $\Gamma_0(V)$ and it is easily seen that, for  two vertices $[x_1],[x_2]$ in $\Gamma(V)$ or in $\hat{\Gamma}(V)$, respectively, we have
\begin{equation}\label{eq:connect}
    \big([x_1]^\bot\subseteq [x_2]^\bot\big)\quad\text{ if and only if }\quad\big(x_1^\bot\subseteq x_2^\bot\big).
\end{equation}
%the  containment of outgoing neighbourhoods of two vertices in $\Gamma(V)$ and in $\hat{\Gamma}(V)$ is related to the containment in $\Gamma_0(V)$. 
%Namely,  for a vertex  $[y]$ in $\Gamma(V)$ or in  $\hat{\Gamma}(V)$, we have  $[y]\in [x_i]^\bot$ if and only if $[y]\subseteq \tilde{x}_i^\bot$.

Let $V_1$ and $V_2$ be two normed spaces over $\mathbb F$. For brevity, we will use the notation $V_1\sim_{\boldsymbol{\Gamma}} V_2$, used also by Tanaka \cite{Tanaka, tanaka1}, to stand for the statement that the digraphs $\boldsymbol{\Gamma}(V_1)$ and $\boldsymbol{\Gamma}(V_2)$ are isomorphic.
As mentioned above, we have the following implications: $$V_1\sim_{\Gamma} V_2\implies V_1\sim_{\Gamma_0} V_2\implies V_1\sim_{\hat{\Gamma}_0} V_2.$$ The converse implications may not hold in general but do hold in case $V_1$ and $V_2$ are smooth or strictly convex (see \cite[Corollary 2.10]{tanaka3}) or more generally when $V_1$ and $V_2$ are BJ-normed spaces (see \cite[Definition 3.2, Lemma 3.4, Lemma 3.7]{kuzma1}). It is  an immediate corollary to  our main result that
 in the case of two simple $C^*$-algebras $\mathfrak A_1$ and $\mathfrak A_2$, the existence of isomorphism between $\Gamma(\mathfrak A_1)$ and $\Gamma(\mathfrak A_2)$, BJ isomorphism between $\mathfrak A_1$ and $\mathfrak A_2$ and isomorphism between $\hat{\Gamma}(\mathfrak A_1)$ and $\hat{\Gamma}(\mathfrak A_2)$ are all equivalent (and  are further equivalent to the existence of $C^*$-isomorphism between $\mathfrak A_1$ and $\mathfrak A_2$).

%its outgoing neighborhood is $$x^\bot:=\{y\in\boldsymbol{\Gamma}; \;\;x\rightarrow y\},$$while its incoming neighborhood is $${}^\bot x:=\{y\in\boldsymbol{\Gamma};\;\; y\rightarrow x\}.$$ 
We define a preorder on $\boldsymbol{\Gamma}(V)$ by $x\precsim y$ if  $x^\bot\subseteq y^\bot$. Recall that a subset of vertices $\mathcal C\subseteq\boldsymbol{\Gamma}$ is a chain (with respect to the above preorder) if every two vertices in $\mathcal C$ are comparable and have distinct outgoing neighborhoods (that is, $x^\bot\subsetneq y^\bot$ or $y^\bot\subsetneq x^\bot$ for every distinct $x,y\in \mathcal C$). A chain is maximal if it cannot be enlarged to a bigger chain. Note that if ${\bf \Gamma}$ contains a looped vertex, i.e., $0$, then this vertex connects to every other vertex ($0$ is    BJ orthogonal to every element)  and as such ends every maximal chain.  We will thus consider only loopless vertices (i.e., nonzero elements) when examining the properties of maximal chains. The cardinality  of  a chain $\mathcal C$ will also be called its length. For a chain $\mathcal C$ in $(\boldsymbol{\Gamma}(V), \precsim)$, we will call the element $x$ and $z$ of $\mathcal C$ to be first and the last element of $\mathcal C$ if $x^\bot\subsetneq y^\bot$ and $y^\bot\subsetneq z^\bot$ for all $y\in \mathcal C$ respectively. In terms of preorder, we will use the notation $(x_1,\dots, x_m)$ for a chain containing elements $\{x_1, \dots, x_m\}$ when we have $x_1^\bot\subsetneq x_2^\bot\subsetneq\dots\subsetneq x_m^\bot$. 

%Our first theorem proves that, in case of complex finite-dimensional $C^*$ algebras, the dimension of the smallest simple $C^*$-algebra which contains $\mathcal A$ can be computed from the digraphs $\boldsymbol{\Gamma}(\mathcal A)$ alone (such properties are called graphological properties).

%Our first theorem proves \textcolor{red}{that the sum of the size of matrix blocks in the block decomposition of a finite-dimensional $C^*$-algebra $\mathcal A$} can be computed from the digraphs $\boldsymbol{\Gamma}(\mathcal A)$ alone (such properties are called graphological properties). 

%\begin{theorem}\label{theorem:maximal} Let $\mathcal A$ be a finite-dimensional $C^*$-algebra with the block decomposition $\mathbb {\mathcal M}_{n_1}(\mathbb K_1)\times\mathbb {\mathcal M}_{n_2}(\mathbb K_2)\times\dots\times\mathbb {\mathcal M}_{n_\ell}(\mathbb K_\ell)$. Then, the length of any maximal chain in $\boldsymbol{\Gamma}(\mathcal A)$ is equal to $n_1+n_2+\dots+n_\ell$.\end{theorem}

Note that some properties of the normed space $V$ can also be computed from its ortho-digraph(s) $\boldsymbol{\Gamma}(V)$ alone. We say that these properties are graphological. One such example is the dimension of a space $V$ (another example will be given in Lemma \ref{lem:unitary}).  If the clique number of $\boldsymbol{\Gamma}$ is infinite, then dimension of $V$ is infinite. In case the clique number of $\boldsymbol{\Gamma}$ is finite, the dimension of $V$ is equal to 
the integer $n$  which is the
minimal possible %among the
cardinality of subsets $\Omega\subseteq \boldsymbol{\Gamma}(V)$ such that $\big|\bigcap_{x\in\Omega} x^\bot\big| = 1$ for $\boldsymbol{\Gamma}(V)=\Gamma_0(V)$ (see  \cite[Theorem 1.1]{kuzma1}); and $\big|\bigcap_{x\in\Omega} x^\bot\big| = 0$ for $\boldsymbol{\Gamma}(V)=\Gamma(V)$ (see \cite[Remark 1.2]{kuzma1}). The same formula computes dimension also for $\boldsymbol{\Gamma}(V)=\hat{\Gamma}_0(V)$ ---  
%is the same as that for the case of $\Gamma(V)$,  
the arguments follows along the line of \cite[Theorem 1.1]{kuzma1}.
%maximal integer $n$ such that for every collection of $(n-1)$ vertices $x_1,\dots, x_{n-1})\in\Gamma$  there exist a vertex $x_n\in\Gamma$ with $x_i\longrightarrow x_n$, if the clique number of $\Gamma$ is finite (see~\cite[Lemma 2.3 and Remark 2.4]{kuzma}). 

With this in mind, let us introduce the dimension, $\dim\boldsymbol{\Gamma}(\mathfrak A)$, of the digraph(s) $\boldsymbol{\Gamma}=\boldsymbol{\Gamma}(V)$ to be the above integer $n$ if $\boldsymbol{\Gamma}$ has a finite clique-number and $\infty$ if $\boldsymbol{\Gamma}$ has an infinite clique number; this matches with $\dim V$.  With this, we have the following procedure to classify the objects in the category of the finite-dimensional simple $C^*$-algebras over $\mathbb F$ using digraph(s) $\boldsymbol{\Gamma}$ only.

\begin{theorem}\label{thm:simple}
    Let $\mathfrak A$ be a finite-dimensional simple $C^*$-algebra over $\mathbb F$. 
    Then, all maximal chains of loopless vertices in $\boldsymbol{\Gamma}(\mathfrak A)\in \{\Gamma_0(\mathfrak A), \Gamma(\mathfrak A), \hat{\Gamma}_0(\mathfrak A)\}$ have the same finite length~$n$.  If   $\dim{\bf \Gamma}({\mathfrak A})>1$, then we have the following.
    \begin{itemize}
        \item[(i)] If  $\dim\boldsymbol{\Gamma}(\mathfrak A)$ is not a perfect square, then $\mathbb F=\mathbb R$ and $\mathfrak A = \mathbb {\mathcal M}_n(\mathbb C)$.
        \item[(ii)] If  $\dim\boldsymbol{\Gamma}(\mathfrak A)$ is divisible by $4$ and $n=\sqrt{\dim \boldsymbol{\Gamma}(\mathfrak A)}/2$, then $\mathbb F=\mathbb R$ and $\mathfrak A = \mathbb {\mathcal M}_n(\mathbb H)$.
        \item[(iii)] If  $\dim\boldsymbol{\Gamma}(\mathfrak A)=n^2$ and $\boldsymbol{\Gamma}(\mathfrak A)$ satisfies the property that for some (hence any) maximal chain of loopless vertices $(A_1,\dots, A_n)$ in $\boldsymbol{\Gamma}(\mathfrak A)$, the cardinality of 
        $$\{X^\bot;\;\;X\in \boldsymbol{\Gamma}(\mathfrak A) \hbox{ is loopless and } A_{n-1}^\bot\subsetneq X^\bot\}$$
        %$\{X\in \boldsymbol{\Gamma}(\mathcal A);\;\; A_{n-1}^\bot\subseteq X^\bot\}$
        is two, then $\mathbb F=\mathbb R$ and $\mathfrak A = \mathbb {\mathcal M}_n(\mathbb R)$.
        \item[(iv)] Otherwise, $\mathbb F=\mathbb C$ and $\mathfrak  A = \mathbb {\mathcal M}_n(\mathbb C)$.
    \end{itemize}
\end{theorem}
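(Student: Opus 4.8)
The plan is to reduce everything to the single-block case $\mathfrak A=\mathcal M_n(\mathbb K)$ with $\mathbb K\in\{\mathbb R,\mathbb C,\mathbb H\}$, which is legitimate because a finite-dimensional simple $C^*$-algebra is exactly one such block (as recalled before the statement), and then to read off the two numerical invariants $n$ (common chain length) and $\dim\boldsymbol\Gamma$ (dimension), together with the local count in clause (iii), directly from the Birkhoff--James geometry of $\mathcal M_n(\mathbb K)$. Since all three digraphs produce isomorphic outgoing-neighbourhood posets (via the implications recalled in the introduction and via \eqref{eq:connect}), the same chain lengths, and the same dimension (by the formula stated before the theorem), it suffices to compute in $\Gamma_0$ and transport the conclusions to $\Gamma$ and $\hat\Gamma_0$.

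First I would establish the key lemma describing the preorder $\precsim$. Using the singular value decomposition $A=U\Sigma V^\ast$ together with the Bhatia--\v Semrl characterization of Birkhoff--James orthogonality (and its real and quaternionic analogues), I attach to each nonzero $A$ its norming subspace $N_A\subseteq\mathbb K^n$ (the right singular vectors for the largest singular value) and the norming partial isometry $W_A\colon N_A\to\mathbb K^n$, $W_A x=Ax/\|A\|$. The characterization I aim to prove is: $A^\bot\subseteq B^\bot$ if and only if $N_A\subseteq N_B$ and $W_B|_{N_A}=\zeta\,W_A$ for a single unit scalar $\zeta$; consequently $A^\bot=B^\bot$ iff the pairs $(N_A,W_A)$ and $(N_B,W_B)$ agree up to such a scalar, and $A^\bot\subsetneq B^\bot$ forces $N_A\subsetneq N_B$. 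The easy ``if'' direction is immediate from $\langle Bx,Cx\rangle=(\|B\|/\|A\|)\,\overline\zeta\,\langle Ax,Cx\rangle$; the ``only if'' direction, producing the required witnesses $C$, is the substantive part. Granting the lemma, a chain with strictly increasing outgoing neighbourhoods is the same as a strictly increasing chain of norming subspaces $N_{A_1}\subsetneq\cdots\subsetneq N_{A_m}$ carrying compatible phases, and since every pair of a subspace and a norm-preserving partial isometry is realized (by that partial isometry extended by $0$), maximal chains correspond exactly to complete flags $0\subsetneq N_1\subsetneq\cdots\subsetneq N_n=\mathbb K^n$ with $\dim N_i=i$. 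Hence every maximal chain of loopless vertices has length exactly $n$, the matrix size, independently of $\mathbb K$ and of which digraph is used.

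Next I would compute the two remaining invariants. The dimension $\dim\boldsymbol\Gamma(\mathfrak A)=\dim_{\mathbb F}\mathfrak A$ equals $n^2$ for complex $\mathcal M_n(\mathbb C)$, and for $\mathbb F=\mathbb R$ it equals $n^2,2n^2,4n^2$ according as $\mathbb K=\mathbb R,\mathbb C,\mathbb H$ (since $\dim_{\mathbb R}\mathbb K=1,2,4$). For the local count in (iii) I take the second-to-last vertex $A_{n-1}$ of a maximal chain, which by the lemma has $N_{A_{n-1}}$ a hyperplane $W$ with $W_{A_{n-1}}=\mathrm{id}_W$; the vertices $X$ with $A_{n-1}^\bot\subsetneq X^\bot$ are precisely the scalar multiples of unitaries $Q$ with $N_X=\mathbb K^n$ and $Q|_W=\mathrm{id}_W$. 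Such a $Q$ acts on the line $W^\perp$ as multiplication by a unit $u\in\mathbb K$, and because the phase freedom is pinned down by the forced action $\mathrm{id}_W$ on $W$, distinct $u$ give distinct outgoing neighbourhoods $X^\bot$. Thus the count equals the cardinality of the unit sphere of $\mathbb K$, namely $2$ for $\mathbb R$ and uncountably infinite for $\mathbb C$ and $\mathbb H$. That the count is independent of the chosen maximal chain (``some hence any'') follows from unitary invariance: the maps $A\mapsto UAV$ with $U,V$ unitary are linear isometries, hence Birkhoff--James automorphisms, and they act transitively on complete flags-with-phases, hence on maximal chains.

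Finally I would assemble the case analysis, checking the four clauses in order. Real $\mathcal M_n(\mathbb C)$ has $\dim=2n^2$, which is never a perfect square, giving (i). Real $\mathcal M_n(\mathbb H)$ has $\dim=4n^2=(2n)^2$ with $\sqrt{\dim}/2=n$ equal to the chain length, giving (ii); the complex and real-$\mathbb R$ blocks have chain length $n$ but $\sqrt{\dim}/2=n/2\ne n$, so they are not caught here. The remaining two possibilities both have $\dim=n^2$ and chain length $n$ and are separated exactly by the local count: $2$ for real $\mathcal M_n(\mathbb R)$ (clause (iii)) versus infinite for complex $\mathcal M_n(\mathbb C)$ (the default (iv)). I expect the main obstacle to be the preorder lemma of the second paragraph, namely pinning down $A^\bot\subseteq B^\bot$ in terms of $(N_A,W_A)$ uniformly over $\mathbb R,\mathbb C,\mathbb H$ (the quaternionic case requiring care with the side of scalar multiplication and with the real-part inner product), since both the universal chain length $n$ and the field-detecting count of clause (iii) are extracted from it.
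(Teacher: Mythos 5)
Your plan reproduces the architecture of the paper's proof almost exactly: your preorder lemma is the paper's Lemma~\ref{lem1} (with $N_A=M_0(A)$ and $W_A$ the normalized restriction of $A$), the correspondence between maximal chains and complete flags of norming subspaces is Lemmas~\ref{thm1} and~\ref{lem:unitary} and Corollary~\ref{cor5}, the dimension count settles (i)--(ii), and the local count at $A_{n-1}$ after diagonalizing the chain by unitary invariance (the paper's Lemma~\ref{lem7}) separates (iii) from (iv) exactly as you describe. The reduction to a single computation in $\Gamma_0$ via \eqref{eq:connect} and the ``some hence any'' step via $X\mapsto UXV^\ast$ are likewise the paper's.

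The one genuine gap is the piece you explicitly defer, the ``only if'' direction of the preorder lemma, and it is worth naming why it is not routine. There are two separate difficulties. First, from $A^\bot\subseteq B^\bot$ one must show that each unit $u\in M_0(A)$ lies in $M_0(B)$ with $Bu$ proportional to $Au$; the paper does this by observing that the hyperplane $\mathrm{Ker}\,F_{u,Au}\subseteq A^\bot$ is contained in $B^\bot$, so by James's characterization $F_{u,Au}$ is a supporting functional of $B$, and the equality case of Cauchy--Schwarz yields both claims. Second --- and this is what your phrase ``a single unit scalar $\zeta$'' quietly assumes --- the scalar produced this way is a priori only pointwise: one obtains $Bu=\zeta_u(\|B\|/\|A\|)Au$ with $\zeta_u$ possibly depending on $u$. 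The paper upgrades this to a single $\zeta$ by a convexity argument: for unit vectors $u,v\in M_0(A)$ the average $\tfrac12(\zeta_u+\zeta_v)$ lies in the numerical range of the compression of $A^\ast B$ to $M_0(A)$, hence equals $\zeta_w$ for some unit $w\in M_0(A)$, and $|\zeta_u+\zeta_v|=2$ together with $|\zeta_u|=|\zeta_v|=1$ forces $\zeta_u=\zeta_v$. Without this uniformity the identification of chains with flags-carrying-phases fails, and with it both the universal chain length and the count in clause (iii). A smaller but real point: $\zeta$ must be taken in the base field $\mathbb F$, not in $\mathbb K$; for the real forms $\mathcal M_n(\mathbb C)$ and $\mathcal M_n(\mathbb H)$ the ``if'' direction, which runs through the real-part functional $F_{u,Au}(X)=\mathrm{Re}((Au)^\ast Xu)$, is only valid for real $\zeta$, which is also what the paper's Lemma~\ref{lem1} asserts.
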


As shown in the example below  we must exclude the case of one-dimensional $C^\ast$-algebra in Theorem \ref{thm:simple}. 
\begin{example} The graphs $\boldsymbol{\Gamma}$ can not distinguish between one-dimensional real $C^*$-algebra $\mathbb {\mathcal M}_1(\mathbb R)$ and one-dimensional complex $C^*$-algebra $\mathbb {\mathcal M}_1(\mathbb C)$. Namely, in these cases,  scalars $x$ and $y$ satisfy  $x\perp y$ if and only if at least one of them is zero. Hence,
    \begin{itemize}
        \item[(a)] $\Gamma_0(\mathbb {\mathcal M}_1(\mathbb R))$ and $\Gamma_0(\mathbb {\mathcal M}_1(\mathbb C))$ are isomorphic to a star graph $K_{1,\infty}$ on continuum many vertices, where the center of the star has a loop.
        \item[(b)] $\Gamma(\mathbb {\mathcal M}_1(\mathbb R))$ and $\Gamma(\mathbb {\mathcal M}_1(\mathbb C))$ are isomorphic to  a null graph $ K_1$ on a single vertex (and no loops).
        \item[(c)] $\hat{\Gamma}_0(\mathbb {\mathcal M}_1(\mathbb R))$ and $\hat{\Gamma}_0(\mathbb {\mathcal M}_1(\mathbb C))$ are isomorphic to an edge $K_2$ with exactly one looped vertex. 
        %\textcolor{red}{with two connected vertices, out of which exactly one vertex has a loop}.
    \end{itemize}
\end{example} 

The above theorem implies that the relation of  BJ orthogonality alone can uniquely, up to  $\ast$-isomorphism, classify the finite-dimensional simple $C^\ast$-algebras of dimension greater than one. More precisely, we have the following. 

\begin{corollary}  Let  $\mathfrak A_1$ and $\mathfrak A_2$ be two simple $C^\ast$ algebras over $\mathbb F_1$ and $\mathbb F_2$, respectively. If $\mathfrak A_1$ is finite-dimensional with dimension greater than one, then the following are equivalent:
\begin{itemize}
\item[(i)] $\mathfrak A_1\sim_{\Gamma_0}\mathfrak A_2$,
\item[(ii)] $\mathfrak A_1\sim_{\Gamma}\mathfrak A_2$,
\item[(iii)] $\mathfrak A_1\sim_{\hat{\Gamma}_0}\mathfrak A_2$,
\item[(iv)] $\mathbb F_1=\mathbb F_2$ and $\mathfrak A_1$ and $\mathfrak A_2$ are isometrically $\ast$-isomorphic $C^\ast$-algebras.
\end{itemize} 
%  Same is true if $\Gamma_0(\A_1)$ is isomorphic to $\Gamma_0(\A_2)$ and $\A_1$ is finite-dimensional.
\end{corollary}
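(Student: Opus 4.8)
The plan is to close the cycle of implications $(ii)\Rightarrow(i)\Rightarrow(iii)\Rightarrow(iv)\Rightarrow(ii)$, of which three links are essentially free. The implications $(ii)\Rightarrow(i)$ and $(i)\Rightarrow(iii)$ are precisely the chain $V_1\sim_{\Gamma}V_2\Rightarrow V_1\sim_{\Gamma_0}V_2\Rightarrow V_1\sim_{\hat{\Gamma}_0}V_2$ recorded in Section \ref{section2}, valid for arbitrary normed spaces, so they need no further argument. For $(iv)\Rightarrow(ii)$ I would observe that an isometric $\ast$-isomorphism $\Phi\colon\mathfrak A_1\to\mathfrak A_2$ is in particular a linear bijective isometry; since Birkhoff--James orthogonality is defined solely through the norm, $x\perp y$ if and only if $\Phi(x)\perp\Phi(y)$, and as $\Phi$ is linear it respects one-dimensional subspaces and hence descends to an isomorphism of the projective ortho-digraphs, giving $\mathfrak A_1\sim_{\Gamma}\mathfrak A_2$ (in fact it simultaneously induces isomorphisms of all three digraphs).

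The substance of the corollary is thus $(iii)\Rightarrow(iv)$. Assume $\mathfrak A_1\sim_{\hat{\Gamma}_0}\mathfrak A_2$, realized by a digraph isomorphism $\Psi$. The first step is to promote $\mathfrak A_2$ to a finite-dimensional algebra. Dimension is a graphological quantity: it is infinite exactly when the clique number of $\boldsymbol{\Gamma}$ is infinite, and otherwise equals the minimal cardinality of a family $\Omega$ of vertices with $\bigl|\bigcap_{x\in\Omega}x^\bot\bigr|$ as prescribed, and this formula is valid for $\hat{\Gamma}_0$ as well. Since $\mathfrak A_1$ is finite-dimensional, $\hat{\Gamma}_0(\mathfrak A_1)$ has finite clique number; as $\Psi$ preserves cliques, neighborhoods, and the above intersection cardinalities, $\hat{\Gamma}_0(\mathfrak A_2)$ also has finite clique number and $\dim\mathfrak A_2=\dim\mathfrak A_1>1$. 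Both algebras being simple and finite-dimensional, the structure theorem recalled in Section \ref{section2} presents each as a single matrix block $\mathcal M_{n_i}(\mathbb K_i)$ over its field, and the dimension hypothesis places both in the regime where Theorem \ref{thm:simple} applies.

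Now I would run Theorem \ref{thm:simple} on the two algebras simultaneously. The key point is that every datum feeding its classification is intrinsic to the digraph: the common length $n$ of maximal chains of loopless vertices, the graph dimension $\dim\boldsymbol{\Gamma}$, the arithmetic tests on $\dim\boldsymbol{\Gamma}$ (being a perfect square, divisibility by $4$, the relation $n=\sqrt{\dim\boldsymbol{\Gamma}}/2$), and the cardinality-two condition on $\{X^\bot:A_{n-1}^\bot\subsetneq X^\bot\}$ appearing in part $(iii)$ are all phrased purely through vertices, outgoing neighborhoods, and the induced preorder $\precsim$. Hence each is invariant under $\Psi$, so $\mathfrak A_1$ and $\mathfrak A_2$ land in the same one of the four mutually exclusive and exhaustive cases of Theorem \ref{thm:simple}. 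This forces $\mathbb F_1=\mathbb F_2=:\mathbb F$ and $\mathfrak A_1\cong\mathcal M_n(\mathbb K)\cong\mathfrak A_2$ as $\ast$-algebras over $\mathbb F$ for one common $\mathbb K$. Finally, because the $C^\ast$-norm on a $\ast$-algebra over $\mathbb F$ is uniquely determined by its algebraic structure (as recalled in the Introduction), any such $\ast$-isomorphism is automatically isometric, yielding $(iv)$ and closing the cycle.

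I expect the only genuinely delicate step to be the transfer itself: verifying carefully that $\mathfrak A_2$ is forced to be finite-dimensional and that \emph{every} ingredient of the classification in Theorem \ref{thm:simple} is strictly graphological, so that it can be read off from $\hat{\Gamma}_0$ and survives verbatim across $\Psi$. Once dimension and the chain/neighborhood data are seen to be isomorphism-invariant, the remaining case analysis and the appeal to norm uniqueness are routine.
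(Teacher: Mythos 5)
Your proof is correct and takes essentially the same route the paper intends: the paper leaves this corollary without an explicit proof, treating it as immediate from Theorem \ref{thm:simple} together with the chain $\sim_{\Gamma}\Rightarrow\sim_{\Gamma_0}\Rightarrow\sim_{\hat{\Gamma}_0}$ and the fact that isometric $\ast$-isomorphisms preserve Birkhoff--James orthogonality. Your spelled-out verification that finite-dimensionality of $\mathfrak A_2$ transfers via the graphological dimension formula for $\hat{\Gamma}_0$ and that every datum in the classification of Theorem \ref{thm:simple} is a digraph invariant is exactly the content the authors suppress.
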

 
%The proofs of our main results will be given in \ref{section3}. In Lemma \ref{lem:unitary}, we will see that a vertex being a scalar multiple of unitary is also a graphological property. Finally in Section \ref{section4}, we will end our article with concluding remarks. % and future directions to work. 

\section{Proofs}\label{section3}

Throughout this section, $\mathfrak A$ will be a finite-dimensional simple $C^*$-algebra. We write $\mathfrak A = \mathbb {\mathcal M}_n(\mathbb K)$ ($\KK\in\{\RR,\CC,\HH\}$) if the underlying field is real or not specified, and we write  $\mathfrak A=\mathbb {\mathcal M}_n(\mathbb C)$ if the underlying field is complex. 
%Let $\mathfrak A = \mathbb {\mathcal M}_n(\mathbb K)$, considered as a $C^*$-algebra over the field $\mathbb R$ or $\mathfrak A=\mathbb {\mathcal M}_n(\mathbb C)$, considered as a $C^*$-algebra over the field $\mathbb C$. 
Recall  that $\mathfrak A$ acts on $\mathbb K^n$ if $\FF=\RR$ and acts on $\mathbb C^n$ if $\FF=\CC$, by left multiplication of matrices and column vectors. Given $u, v\in\mathbb K^n$, we define the ${\mathbb R}$-linear functional  $F_{u, v}$ (when $\mathbb F=\mathbb R$)  on $\mathfrak A$ %considered as an algebra over the field $\mathbb R$ 
as $$F_{u,v}(B) = \text{Re}(v^*Bu),$$ and  we define the ${\mathbb C}$-linear functional, again denoted by $F_{u, v}$ (when $\FF=\CC$) on $\mathfrak A$ 
%for $\mathbb {\mathcal M}_n(\mathbb C)$ considered as an algebra over the field $\mathbb C$ 
as $$F_{u,v}(B) = v^*Bu.$$ In both cases, its  operator norm equals $\|F_{u, v}\| = \|u\|\|v\|$. 
% Just use Cauchy-Schwarz to prove $\|F_{u, v}\| \leq \|u\|\|v\|$ because $|F_{u,v}(B)|=|v^*Bu|\leq \|v\|\|B\|\|U\|$. And equality occurs because $F_{u,v}(vu^*)=v^*vu^*u$ 
Also, by using matrix representation
$\left[\begin{smallmatrix}
    a & b\\
    -b &a
\end{smallmatrix}\right]$ and   $\left[\begin{smallmatrix}
    a+bi & c+di\\
    -c+di & a-bi
\end{smallmatrix}\right]$ for complex number $a+bi$ and quaternion $a+bi+cj+dk$, respectively, we can embed $\mathbb {\mathcal M}_n(\mathbb C)$ and $\mathbb {\mathcal M}_n(\mathbb H)$ (considered as a $C^*$-algebra over the field $\mathbb R$) $\ast$-isomorphically into $(\mathbb {\mathcal M}_{2n}(\mathbb R), \|\cdot\|)$ and $(\mathbb {\mathcal M}_{4n}(\mathbb R), \|\cdot\|)$ respectively. 
%Since the Birkhoff-James orthogonality of a vector to another vector depends only on the two-dimensional subspace generated by the given vectors. 
%These embeddings will enable us to prove our results for $C^*$-algebra $\mathbb {\mathcal M}_n(\mathbb F)$ considered as $C^*$-algebra over $\mathbb F$ and the corresponding result will also follow for $\mathbb {\mathcal M}_n(\mathbb C)$ and $\mathbb {\mathcal M}_n(\mathbb K)$ considered as a $C^*$-algebra over $\mathbb R$.

%We recall the singular value decomposition for 
Each $A\in\mathbb {\mathcal M}_n(\mathbb K)$ has a singular value decomposition, that is, there exist $\KK$-orthonormal bases  $\{u_1, \dots, u_n\}$ and $\{v_1, \dots, v_n\}$  (i.e., $u_i^\ast u_j=v_i^\ast v_j=\delta_{ij}$, Kronecker delta) in $\KK^n$ such that $A=\sum\limits_{i=1}^n\sigma_iv_iu_i^*$, where $\sigma_1\geq\sigma_2\geq\dots\geq \sigma_n\ge0$ are the singular values of $A$
%. It states that for $A\in\mathbb {\mathcal M}_n(\mathbb K)$, there exists $\KK$-orthonormal basis of vectors $\{u_1, \dots, u_n\}$ and $\{v_1, \dots, v_n\}$  (i.e., $u_i^\ast u_j=v_i^\ast v_j=\delta_{ij}$, Kronecker delta) such that $A=\sum\limits_{i=1}^n\sigma_iv_iu_i^*$, where $\sigma_1\geq\sigma_2\geq\dots\geq \sigma_n$ are singular values of $A$ arranged in the decreasing order 
(see \cite[Theorem 7.2]{zhang} for the singular value decomposition of  quaternionic matrices).
%We first give various lemmas for various properties of $(\mathbb {\mathcal M}_n(\mathbb K), \|\cdot\|)$ and  $\boldsymbol{\Gamma}(\mathbb {\mathcal M}_n(\mathbb K), \|\cdot\|)$ and then later use these properties to prove our main theorems. 
Finally, for $A\in\mathbb {\mathcal M}_n(\mathbb K)$ we define the set of its norm-attaining vectors by  
 $$M_0(A) = \{u\in\mathbb K^n;\;\; \|Au\|=\|A\|\|u\|\}.$$
 It  is well-known that this is a $\KK$-vector subspace of $
 \KK^n$, at least  when  $\KK\in\{\RR,\CC\}$. 
 %Our first lemma is well known for $\mathbb K\in\{\mathbb R,\mathbb C\}$. 
 We nonetheless supply the proof, which will also cover the case of  $\mathbb K=\mathbb H$, for the sake of convenience.
 
\begin{lemma}\label{lem10} For $A\in\mathbb {\mathcal M}_n(\mathbb K)$, we have $M_0(A)=\mathrm{Ker}\,(A^\ast A-\|A\|^2\,I)$.
\end{lemma}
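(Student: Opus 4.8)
The plan is to prove the set equality $M_0(A)=\mathrm{Ker}\,(A^\ast A-\|A\|^2 I)$ by a two-sided inclusion, relying on the singular value decomposition just recalled. Write $A=\sum_{i=1}^n\sigma_i v_i u_i^\ast$ with $\sigma_1\ge\dots\ge\sigma_n\ge 0$ and $\KK$-orthonormal bases $\{u_i\}$, $\{v_i\}$, so that $\|A\|=\sigma_1$. Since $\{u_1,\dots,u_n\}$ is an orthonormal basis of $\KK^n$, every $u\in\KK^n$ can be written uniquely as $u=\sum_{i=1}^n u_i c_i$ with coefficients $c_i\in\KK$ (with scalars on the right, matching the right-vector-space convention fixed in the preliminaries), and $\|u\|^2=\sum_i |c_i|^2$. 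Using $u_i^\ast u_j=\delta_{ij}$ and $v_i^\ast v_j=\delta_{ij}$, I would compute $Au=\sum_i \sigma_i v_i c_i$, hence $\|Au\|^2=\sum_i \sigma_i^2 |c_i|^2$.

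The norm-attainment condition $\|Au\|=\|A\|\,\|u\|$ then reads $\sum_i \sigma_i^2|c_i|^2=\sigma_1^2\sum_i|c_i|^2$, i.e. $\sum_i(\sigma_1^2-\sigma_i^2)|c_i|^2=0$. As every term is nonnegative, this forces $c_i=0$ whenever $\sigma_i<\sigma_1$; equivalently, $u$ lies in the $\KK$-span of those $u_i$ with $\sigma_i=\sigma_1=\|A\|$. On the other side, $A^\ast A=\sum_i \sigma_i^2 u_i u_i^\ast$ (again by orthonormality of the $v_i$), so $A^\ast A\,u=\sum_i \sigma_i^2 u_i c_i$ and $(A^\ast A-\|A\|^2 I)u=\sum_i(\sigma_i^2-\sigma_1^2)u_i c_i$, which vanishes exactly when $c_i=0$ for every $i$ with $\sigma_i\ne\sigma_1$. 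Thus both $M_0(A)$ and $\mathrm{Ker}(A^\ast A-\|A\|^2 I)$ coincide with $\Span_{\KK}\{u_i;\ \sigma_i=\|A\|\}$, giving the equality and, as a bonus, showing it is a $\KK$-subspace.

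The main subtlety, and the reason the proof is worth writing out for $\KK=\HH$, is the noncommutativity of the quaternions: I must be careful to keep all scalar coefficients $c_i$ on the correct side throughout, since $u_i c_i$ and $c_i u_i$ differ, and the computation of $u^\ast A^\ast A u$ (or of $\|Au\|^2=\Re(u^\ast A^\ast A u)$) involves products like $\bar c_i\,\sigma_i^2\,c_i=\sigma_i^2|c_i|^2$ where one uses that the singular values $\sigma_i$ are real and central. The cross terms vanish because of orthonormality rather than any commutativity, so they cause no trouble, but I would state explicitly that $|c_i|^2=\bar c_i c_i\in\RR_{\ge 0}$ and that $\Re$ and $|\cdot|$ behave as expected on $\KK$ to make the two displayed sums legitimate over $\HH$. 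No genuine obstacle remains once the bookkeeping of left/right multiplication is handled with care.
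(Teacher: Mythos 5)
Your proposal is correct and follows essentially the same route as the paper's own proof: expand $u$ in the right singular basis, use orthonormality (Pythagoras) to get $\|Au\|^2=\sum_i\sigma_i^2|c_i|^2$, and match this against the kernel of $A^\ast A-\|A\|^2I=\sum_i(\sigma_i^2-\sigma_1^2)u_iu_i^\ast$, identifying both sets with $\Span_{\KK}\{u_i;\ \sigma_i=\sigma_1\}$. Your explicit remarks on right scalar multiplication and the centrality of the real singular values over $\HH$ are a welcome, slightly more careful rendering of the same bookkeeping the paper performs implicitly.
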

\begin{proof} Using the singular value decomposition, we have $A = \sum\limits_{i=1}^n \sigma_i v_iu_i^*$ where $\sigma_1\ge \dots\ge\sigma_{n}\ge0$. We claim that $M_0(A)$ is the $\mathbb K$-linear span of 
orthonormal vectors $\{u_1, \dots, u_k\}$ where $k$ is the largest integer such that $\sigma_k=\sigma_1$. Let $x\in\mathbb K^n$. Then $x=\sum\limits_{i=1}^n u_i\alpha_i$ for some $\alpha_i\in \mathbb K$, and hence 
$$Ax = \sigma_1\bigg(\sum\limits_{i=1}^k  v_i\alpha_i\bigg) +\sum\limits_{i=k+1}^n \sigma_i  v_i\alpha_i.$$ Since $\{v_1,\dots,v_n\}$ is an orthonormal basis, Pythagorean theorem gives $$\|Ax\|^2=\sigma_1^2\bigg(\sum\limits_{i=1}^k |\alpha_i|^2\bigg)+\sum\limits_{i=k+1}^n \sigma_i^2|\alpha_i|^2.$$ Since $\|A\|=\sigma_1$ and $\sigma_i<\sigma_1$,  ($i\ge k+1$) we have, $\|Ax\|^2=\|A\|^2\|x\|^2=\sigma_1^2\bigg(\sum\limits_{i=1}^n |\alpha_i|^2\bigg)$ if and only $\alpha_i=0$ for all $k+1\leq i\leq n$. This proves that the $\KK$-linear span of $\{u_1, \dots, u_k\}$ is equal to $M_0(A)$.

To finish the proof, we note that $A^*A = \sum\limits_{i=1}^n \sigma_i^2 u_iu_i^*$, so, $$A^*A -\|A\|^2I = \sum\limits_{i=k+1}^n (\sigma_i^2-\sigma_1^2)u_iu_i^*.$$ Hence, $x\in \mathrm{Ker}(A^*A -\|A\|^2I)$ if and only if $x$ is in the $\KK$-linear  span of $\{u_1, \dots, u_k\}$, which as we already know, is equal to $M_0(A)$.
\end{proof}

The next classification of BJ orthogonality in  $\mathfrak A$ is crucial for us. 
%We will use the Stampfli-Magajna-Bhatia-\v Semrl classification of BJ orthogonality in $C^*$ norm.

\begin{proposition}[Stampfli-Magajna-Bhatia-\v Semrl]\label{prop1} Let $A, B\in \mathfrak A$. Then, $B\in A^\bot$ if and only if there exist a unit vector $u\in M_0(A)$ such that $B\in \mathrm{Ker}(F_{u,\,Au})$.
\end{proposition}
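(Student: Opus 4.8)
The plan is to establish the two implications separately: the ``if'' direction is a short computation, while the ``only if'' direction carries all the content and is where the genuine obstacle lies.

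For sufficiency, suppose $u\in M_0(A)$ is a unit vector with $F_{u,Au}(B)=0$. I would test the operator $A+\lambda B$ on $u$ alone. Since the norm on $\KK^n$ is induced by $\langle x,y\rangle=\Re(x^\ast y)$, expanding gives, for every $\lambda\in\FF$,
$$\|(A+\lambda B)u\|^2=\|Au\|^2+2\Re\bigl(\lambda\,(Au)^\ast Bu\bigr)+|\lambda|^2\|Bu\|^2 .$$
The cross term equals $2\lambda F_{u,Au}(B)$ in the real case and $2\Re(\lambda(Au)^\ast Bu)$ with $(Au)^\ast Bu=0$ in the complex case, so it vanishes; hence $\|(A+\lambda B)u\|^2\ge\|Au\|^2=\|A\|^2$ because $u\in M_0(A)$ and $\|u\|=1$. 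As $\|A+\lambda B\|\ge\|(A+\lambda B)u\|$, this yields $\|A+\lambda B\|\ge\|A\|$ for all $\lambda$, i.e.\ $B\in A^\bot$.

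For necessity, assume $B\in A^\bot$. Then the convex function $g(\lambda):=\|A+\lambda B\|^2$ on $\FF\cong\RR^{\dim_\RR\FF}$ attains its global minimum at $\lambda=0$, so $0\in\partial g(0)$. Writing $g$ as the pointwise maximum over the unit sphere of the smooth convex functions $\phi_x(\lambda):=\|(A+\lambda B)x\|^2$, Lemma~\ref{lem10} characterizes the active maximizers at $\lambda=0$ as precisely the unit vectors of $M_0(A)=\mathrm{Ker}(A^\ast A-\|A\|^2 I)$. The standard formula for the subdifferential of a supremum (valid here, in finite dimension with compact index set) then places $0$ in the convex hull of the gradients $\nabla_\lambda\phi_x(0)$; a direct computation shows these gradients are, the harmless factor $2$ and a complex conjugation aside, the values $F_{x,Ax}(B)$. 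Hence $0\in\conv W$, where $W:=\{(Ax)^\ast Bx:\ x\in M_0(A),\ \|x\|=1\}$ (in the real case one reads off only real parts).

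The remaining, and decisive, step is to upgrade $0\in\conv W$ to $0\in W$, producing a single unit vector $u$ with $F_{u,Au}(B)=0$. In the real case $W$ is a real interval meeting both $(-\infty,0]$ and $[0,\infty)$, so I would invoke the intermediate value theorem along the connected unit sphere of $M_0(A)$ (handling the exceptional $\dim_\RR M_0(A)=1$ case, two antipodal points carrying the same value, separately). The complex case is the main obstacle: there the real and imaginary parts of $(Au)^\ast Bu$ cannot be annihilated independently by a one-real-parameter argument. I would resolve this by observing that $W$ is the numerical range of the compression $T=P\,A^\ast B\,P$ of $A^\ast B$ to the complex subspace $M_0(A)$, where $P$ is the orthogonal projection onto $M_0(A)$, since $(Ax)^\ast Bx=\langle x,Tx\rangle$ for $x\in M_0(A)$. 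By the Toeplitz--Hausdorff theorem $W$ is convex and compact, whence $\conv W=W$ and $0\in W$ directly. In either case the resulting $u\in M_0(A)$ satisfies $F_{u,Au}(B)=0$, completing the argument.
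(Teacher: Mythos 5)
Your proof is correct, but it is genuinely different in character from what the paper does: the paper's ``proof'' of Proposition~\ref{prop1} is essentially a citation --- it invokes Stampfli and Magajna for $\FF=\CC$, notes that the Bhatia--\v Semrl argument works verbatim for ${\mathcal M}_n(\RR)$ over $\RR$, and then disposes of ${\mathcal M}_n(\CC)$ and ${\mathcal M}_n(\HH)$ viewed over $\RR$ by the $\ast$-embeddings into ${\mathcal M}_{2n}(\RR)$ and ${\mathcal M}_{4n}(\RR)$. You instead give a self-contained derivation: the sufficiency computation is the standard one, and for necessity you combine the subdifferential formula for a maximum over a compact index set (Danskin/Valadier) with the Toeplitz--Hausdorff theorem in the complex case and a connectedness/intermediate-value argument on the unit sphere of $M_0(A)$ in the real case (correctly isolating the degenerate case $\dim_\RR M_0(A)=1$, where the two antipodal points carry the same value). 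This is in spirit the Bhatia--\v Semrl proof, but written out in full and covering $\KK=\HH$ directly, which buys you independence from the embedding trick and from the cited literature at the cost of importing two standard external tools. One small point worth making explicit if you write this up: in passing from $0\in\partial g(0)$ to $0\in\conv W$ you take a complex conjugate of the gradient set, and since conjugation is a real-linear bijection fixing $0$ it commutes with convex hulls, so no information is lost; likewise, for $\FF=\RR$ with $\KK\in\{\CC,\HH\}$ the real unit sphere of $M_0(A)$ has real dimension at least $2$, so the exceptional disconnected case really only occurs for $\KK=\RR$.
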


\begin{proof}
If $\mathbb F=\mathbb C$ this was proven in \cite[Theorem 2]{STAMPF} for the case when $B=I$, and then in  \cite{MAGAJN} for a general~$B$. An alternative proof was given in \cite[Theorem 1]{bhatia} whose arguments work equally well for the case $\mathbb {\mathcal M}_n(\mathbb R)$ over the field $\FF=\mathbb R$. 
%This is equivalent to the fact that there exists a unit vector $u\in M_0(A)$ such that $B\in \text{Ker}(F_{u,\,Au})$.
%by which a matrix $A$ is BJ orthogonal to a matrix $B$ if and only if there exists a unit vector $u\in\mathbb C^n$ with $\|Au\|=\|A\|$ such that $(Au)^*Bu=0$ (
Finally, when considering $\mathbb {\mathcal M}_n(\mathbb C)$ and $\mathbb {\mathcal M}_n(\mathbb H)$ as $C^*$-algebras over $\mathbb F=\mathbb R$, we embed them 
%use the fact that $\mathbb {\mathcal M}_n(\mathbb C)$ and $\mathbb {\mathcal M}_n(\mathbb H)$ can be embedded 
into $\mathbb {\mathcal M}_{2n}(\mathbb R)$ and $\mathbb {\mathcal M}_{4n}(\mathbb R)$, respectively. 
\end{proof}
A generalization of Proposition \ref{prop1} was given in \cite[Corollary 1.3]{groversingla} and \cite[Corollary 2.5]{singla}. 

%We will be using the same symbol $A$ for the elements of $\mathfrak A$ as well as $\boldsymbol{\Gamma}(\mathfrak A)$, and the distinction will be clear from the context.

%Recall that $x^\bot$ denotes the outgoing neighborhood of (a vertex) $x$. 
%in the appropriate ortho-digraph $\boldsymbol{\Gamma}(\mathfrak A)\in\{\Gamma_0(\mathfrak A),\Gamma(\mathfrak A),\hat{\Gamma}(\mathfrak A)\}$. In fact, it is easily seen that the  containment of outgoing neighbourhoods of two vertices in $\Gamma(\mathfrak A)$ and in $\hat{\Gamma}(\mathfrak A)$ is related to the containment in $\Gamma_0(\mathfrak A)$. Namely, let $[A_1], [A_2]$ be two vertices in $\Gamma(\mathfrak A)$ or in $\hat{\Gamma}(\mathfrak A)$, respectively, with $\tilde{A}_1, \tilde{A}_2$  any of their representatives (considered  as  vertices from  $\Gamma_0(\mathfrak A)$).   Since, for a vertex  $[B]$ in $\Gamma(\mathfrak A)$ or in  $\hat{\Gamma}(\mathfrak A)$, we have  $[B]\in [A_i]^\bot$ if and only if $[B]\subseteq \tilde{A}_i^\bot$, we have \begin{equation}\label{eq:connect}  \big([A_1]^\bot\subseteq [A_2]^\bot\big)\text{ if and only if }\big(\tilde{A}_1^\bot\subseteq\tilde{A}_2^\bot\big).\end{equation}
%In the next lemma, we provide an equivalent condition for $\boldsymbol{\Gamma}(A_1)^\bot\subseteq\boldsymbol{\Gamma}(A_2)^\bot$ in terms of $M_0(A_1)$ and $M_0(A_2)$.
To simplify the next and further statements we will be using the same symbol $A$ for the vertices in $\boldsymbol{\Gamma}(\mathfrak A)$  as well as for (one of) its  representatives in $\mathfrak A$. That is,   instead of   `if $A\in\mathfrak A$ then $[A]\in\boldsymbol{\Gamma}(\mathfrak A)$'  we will write simply `if $A\in\mathfrak A$ then $A\in\boldsymbol{\Gamma}(\mathfrak A)$;'  the distinction will be clear from the context.
%and  when  no confusion is possible,  we will denote a vertex  $[A]$ in $ \Gamma(\mathfrak A)$ or in $\hat{\Gamma}(\mathfrak A)$ simply by its representative $A$.

\begin{lemma}\label{lem1}
Let $A_1, A_2$ be two loopless vertices in $\boldsymbol{\Gamma}(\mathfrak A)$. Then $A_1^\bot\subseteq A_2^\bot$ if and only if $M_0(A_1)\subseteq M_0(A_2)$ and there exists a nonzero $\alpha\in \mathbb F$, the underlying field of $\mathfrak A$, such that $A_1u=\alpha(A_2u)$ for all $u\in M_0(A_1)$.
\end{lemma}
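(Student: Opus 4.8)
The plan is to translate both implications into the language of Proposition~\ref{prop1}, which tells us that $B\in A^\bot$ if and only if $F_{u,Au}(B)=0$ for \emph{some} unit vector $u\in M_0(A)$; equivalently $A^\bot=\bigcup_u\mathrm{Ker}\,(F_{u,Au})$, the union running over unit $u\in M_0(A)$. For the \emph{if} direction, suppose $M_0(A_1)\subseteq M_0(A_2)$ and $A_1u=\alpha A_2u$ on $M_0(A_1)$ for a fixed nonzero $\alpha\in\mathbb F$. Given $B\in A_1^\bot$, choose a witnessing unit $u\in M_0(A_1)$ with $F_{u,A_1u}(B)=0$. Since $u\in M_0(A_2)$ and $A_1u=\alpha A_2u$, a one-line computation gives $F_{u,A_1u}(B)=\bar\alpha\,F_{u,A_2u}(B)$ (with $\bar\alpha=\alpha$ when $\mathbb F=\mathbb R$); as $\alpha\neq0$ this forces $F_{u,A_2u}(B)=0$, whence $B\in A_2^\bot$ by Proposition~\ref{prop1}.

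The substance is the \emph{only if} direction, which I would prove by contradiction using the orthogonal projection $P$ of $\mathbb K^n$ (resp. $\mathbb C^n$) onto $N:=M_0(A_2)$; throughout I use that, by the singular value decomposition, $A_2w=\|A_2\|\,Uw$ on $N$ for a $\mathbb K$-isometry $U$, so that $(A_2w)^*(A_2w')=\|A_2\|^2\,w^*w'$ for $w,w'\in N$. \textbf{Step 1: $M_0(A_1)\subseteq M_0(A_2)$.} If some unit $u_0\in M_0(A_1)$ lay outside $N$, then $\xi:=(I-P)u_0\neq0$, and the operator $B:=A_2P-A_2(Pu_0)\,\xi^*/\|\xi\|^2$ satisfies $Bw=A_2w$ for all $w\in N$ and $Bu_0=0$. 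The first property gives $F_{w,A_2w}(B)=\|A_2\|^2\neq0$ for every unit $w\in N$, so $B\notin A_2^\bot$; the second gives $F_{u_0,A_1u_0}(B)=0$, so $B\in A_1^\bot$, contradicting $A_1^\bot\subseteq A_2^\bot$.

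\textbf{Step 2: the scalar relation.} With $M_0(A_1)\subseteq N$ in hand, I fix a unit $u\in M_0(A_1)$ and show $A_2u\in\mathbb F\,A_1u$. Assuming otherwise, I can pick $\zeta\in\mathbb K^n$ with $F_{u,A_1u}(\zeta u^*\cdot\,)=0$, i.e. $\langle A_1u,\zeta\rangle=0$, but $\Re\langle A_2u,\zeta\rangle>0$ (when $\mathbb K=\mathbb H$ this uses $A_2u\notin\mathbb R\,A_1u$ and that $\Re(\bar q\,w)>0$ for a suitable pure quaternion $w$). Setting $B_t:=tA_2P+(\zeta-tA_2u)u^*$, one has $B_tu=\zeta$ and $B_t$ supported on $N$, and the isometry identity shows that the real part of $F_{w,A_2w}(B_t)$ equals $t\|A_2\|^2\big(1-|u^*w|^2\big)+\Re\!\big((A_2w)^*\zeta\,(u^*w)\big)$ for every unit $w\in N$. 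Here the first summand is nonnegative and vanishes exactly on $\{u\mu:|\mu|=1\}$, where the second summand reduces to $\Re\langle A_2u,\zeta\rangle>0$ by the cyclicity $\Re(ab)=\Re(ba)$; a compactness argument then yields, for $t$ large, $F_{w,A_2w}(B_t)\neq0$ for \emph{all} unit $w\in N$, so $B_t\notin A_2^\bot$, while $F_{u,A_1u}(B_t)=\langle A_1u,\zeta\rangle=0$ gives $B_t\in A_1^\bot$ --- a contradiction. Hence $A_2u=c(u)A_1u$ with $c(u)\in\mathbb F\setminus\{0\}$ for every unit $u\in M_0(A_1)$; substituting $u_1$, $u_2$ and a normalization of $u_1+u_2$, and using that $A_1$ is injective on $M_0(A_1)$, forces $c(u)$ to be a single constant $c$, and $\alpha:=c^{-1}$ is the desired scalar.

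The main obstacle is exactly the existential quantifier on the witness vector in Proposition~\ref{prop1}: the inclusion $A_1^\bot\subseteq A_2^\bot$ only says that each hyperplane $\mathrm{Ker}\,(F_{u,A_1u})$ lies in the \emph{union} $\bigcup_w\mathrm{Ker}\,(F_{w,A_2w})$, and since this union need not be meagre one cannot conclude containment in a single member, so a naive ``subspace inside a union of subspaces'' argument fails. The device that circumvents this is to engineer the test operators $B$ (through the projection $P$ and a dominating large parameter $t$) so that $\Re\,F_{w,A_2w}(B)$ is \emph{strictly positive simultaneously for all} unit $w\in N$, which certifies $B\notin A_2^\bot$ without ever exhibiting a witness; the quaternionic case is then absorbed either by the $\ast$-embedding into $\mathcal M_{4n}(\mathbb R)$ or directly via the trace-like identity $\Re(ab)=\Re(ba)$.
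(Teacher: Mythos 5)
Your proof is correct, but on the substantive (``only if'') direction it takes a genuinely different route from the paper's. The paper removes the ``hyperplane inside a union of kernels'' obstacle you identify by quoting James's duality theorem \cite[Theorem 2.1]{james}: the inclusion $\mathrm{Ker}(F_{u,A_1u})\subseteq A_1^\bot\subseteq A_2^\bot$ says precisely that $A_2$ is BJ-orthogonal to the whole hyperplane $\mathrm{Ker}(F_{u,A_1u})$, which by James is equivalent to the norming condition $|F_{u,A_1u}(A_2)|=\|F_{u,A_1u}\|\,\|A_2\|$; Cauchy--Schwarz and its equality case then give both $u\in M_0(A_2)$ and $A_1u=\alpha(u)A_2u$ in two lines, and the constancy of $\alpha(u)$ is extracted from the convexity (Toeplitz--Hausdorff) of the numerical range of the compression of $A_1^*A_2$ to $M_0(A_1)$. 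You instead build explicit test matrices ($B$ in Step~1, $B_t$ in Step~2) whose values $F_{w,A_2w}(\,\cdot\,)$ are simultaneously bounded away from zero over all unit $w\in M_0(A_2)$ --- certifying non-membership in $A_2^\bot$ without exhibiting a witness --- and you obtain constancy of the scalar from additivity together with injectivity of $A_1$ on $M_0(A_1)$, rather than from numerical-range convexity. The paper's route buys brevity and uniformity across $\KK\in\{\RR,\CC,\HH\}$; yours buys self-containedness (no appeal to \cite{james} nor to convexity of real numerical ranges) at the cost of the compactness estimate for $B_t$ and some care with quaternionic right-scalar conventions. Two small remarks: as in the paper, one should first reduce to $\boldsymbol{\Gamma}=\Gamma_0$ via \eqref{eq:connect} so that $A_i^\bot$ has its spatial meaning; and your ``pure quaternion'' aside in Step~2 is unnecessary --- taking $\zeta$ to be the component of $A_2u$ orthogonal to $A_1u$ with respect to $\Re(\,\cdot^\ast\cdot\,)$ already yields $\langle A_1u,\zeta\rangle_{\FF}=0$ and $\Re\langle A_2u,\zeta\rangle=\|\zeta\|^2>0$ whenever $A_2u\notin\FF A_1u$.
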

\begin{proof} By~\eqref{eq:connect} we can assume that $\boldsymbol{\Gamma} = \Gamma_0$. We provide the proof for the case when $\mathfrak A=\mathbb {\mathcal M}_n(\mathbb K)$, is a real $C^*$-algebra;  the proof when  $\mathfrak A=\mathbb {\mathcal M}_n(\mathbb C)$ is  a complex  $C^*$-algebra  follows along the similar lines --- we merely  replace $\text{Re}\,((A_iu)^\ast (Bu))=0$ with $(A_iu)^\ast (Bu)=0$ and  $\alpha\in \mathbb R$ with $\alpha\in \mathbb C$.

Recall that loopless vertices correspond to nonzero elements. Hence, by homogeneity  of BJ orthogonality, we can assume that $\|A_1\|=\|A_2\|=1$. Using Proposition \ref{prop1}, $B\in A_i^\bot$ if and only if 
%there exists a unit vector $u\in M_0(A_i)$ such that $\text{Re}((A_iu)^*Bu) =0$ for $i=1,2$. This is equivalent to the fact that 
there exists a unit vector $u\in M_0(A_i)$ such that $B\in \text{Ker}(F_{u,\,A_iu})$. Hence, \begin{equation}\label{eq2}A_i^\bot = \bigcup_{u\in M_0(A_i)} \text{Ker}(F_{u,A_iu}).\end{equation} 
Therefore, if $M_0(A_1)\subseteq M_0(A_2)$ and there exists  a nonzero $\alpha\in \mathbb R$  such that $A_1u=\alpha(A_2u)$ for all $u\in M_0(A_1)$, then using \eqref{eq2}, we get $A_1^\bot\subseteq A_2^\bot$. 

Conversely,  if  $A_1^\bot\subseteq A_2^\bot$, then for all unit vectors $u\in M_0(A_1)$, the hyperplane  $\text{Ker}(F_{u,A_1u})$
%$\{B;\;\; B\in \text{Ker}(F_{u,A_1u})\}$
is contained in $A_2^\bot$. Using \cite[Theorem 2.1]{james},
(whose proof, though stated for real spaces,  works verbatim over real as well as  complex normed spaces)
it implies that $F_{u,A_1u}$ is a supporting functional of $A_2$, that is,
$$|F_{u,A_1u}(A_2)| = \|F_{u,A_1u}\|\cdot \|A_2\|=\|u\|\cdot\|A_1u\|\cdot\|A_2\|,$$ which, by the definition of $F_{u,A_1u}$  and as $u\in M_0(A_1)$ is a unit vector,  is equivalent to  \begin{equation}\label{eq1}\big|\text{Re}((A_1u)^*A_2u)\big| = \|A_1u\|\|A_2\|=\|A_1\|\cdot\|A_2\|=1\cdot 1=1.\end{equation} Using Cauchy-Schwarz inequality, we get $\|A_2u\| = 1$. It implies $u\in M_0(A_2)$ and therefore $M_0(A_1)\subseteq M_0(A_2)$. 

Using the condition of equality in Cauchy-Schwarz inequality in \eqref{eq1}, we further get  $A_1u = \alpha(A_2u)$ for  $\alpha\in \mathbb R$ with $|\alpha|=1$. Therefore, for every two unit vectors $u, v\in M_0(A_1)$ there exist scalars $\alpha, \beta$ with $|\alpha|=|\beta|=1$ such that $A_1u = \alpha(A_2u)$ and $A_1v = \beta(A_2v)$. It only remains to show that $\alpha = \beta$. 
%This is clear if $A_2u,A_2v$ are linearly independent because, by Lemma~\ref{lem10}, $u+v$ is also in $M_0(A_1)$.  
We first observe that there exists a unit vector $w\in M_0(A_1)$ such that $$\dfrac{\alpha+\beta}{2}=\dfrac{\text{Re}((A_2u)^*A_1u)+\text{Re}((A_2v)^*A_1v)}{2}=\text{Re}((A_2w)^*A_1w);$$ in the last step, we have used the convexity of $\{\text{Re}(u^*A_1^*A_2u);\;\; \|u\|=1, u\in M_0(A_1)\}$, the numerical range of the compression of  $A_1^*A_2$   to the subspace $M_0(A_1)$ (recall that $A_1^*A_2$ is embedded into a suitable ${\mathcal M}_{n}(\RR), {\mathcal M}_{2n}(\RR)$ or ${\mathcal M}_{4n}(\RR)$). Again, $A_1^\bot\subseteq A_2^\bot$ implies that $A_1w = \gamma(A_2w)$ for some $\gamma$ with $|\gamma|=1$. Thus, $|\alpha+\beta|=2|\text{Re}((A_2w)^*(\gamma A_2w))|=2$. But $|\alpha|=|\beta|=1$ and hence  $\alpha=\beta$.
\end{proof}

\begin{corollary}\label{cor1} Let $A_1, A_2$ be two loopless vertices in $\boldsymbol{\Gamma}(\mathfrak A)$. Then, $A_1^\bot= A_2^\bot$ if and only if $M_0(A_1)=M_0(A_2)$ and there exists nonzero $\alpha\in \mathbb F$, the underlying field of $\mathfrak A$, such that $A_1u=\alpha(A_2u)$ for all $u\in M_0(A_1)$.
\end{corollary}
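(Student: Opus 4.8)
The plan is to derive Corollary \ref{cor1} directly from Lemma \ref{lem1} by exploiting the symmetry of the equality $A_1^\bot = A_2^\bot$. Observe that $A_1^\bot = A_2^\bot$ holds if and only if both inclusions $A_1^\bot \subseteq A_2^\bot$ and $A_2^\bot \subseteq A_1^\bot$ hold simultaneously. Applying Lemma \ref{lem1} to each inclusion separately is the natural first move: the inclusion $A_1^\bot \subseteq A_2^\bot$ gives $M_0(A_1) \subseteq M_0(A_2)$ together with a nonzero scalar $\alpha$ satisfying $A_1 u = \alpha(A_2 u)$ for all $u \in M_0(A_1)$, while the reverse inclusion $A_2^\bot \subseteq A_1^\bot$ gives $M_0(A_2) \subseteq M_0(A_1)$ together with a nonzero scalar $\beta$ satisfying $A_2 u = \beta(A_1 u)$ for all $u \in M_0(A_2)$.

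For the forward direction (equality of outgoing neighborhoods implies the stated conditions), the two opposite inclusions of norming subspaces immediately combine to yield $M_0(A_1) = M_0(A_2)$, which is exactly the equality asserted in the corollary. The scalar relation is already supplied by either application of the lemma; one simply keeps the $\alpha$ coming from $A_1^\bot \subseteq A_2^\bot$. (As a consistency check one sees $\alpha\beta = 1$, so $\beta = \alpha^{-1}$, but this is not needed for the statement.) For the converse direction, if $M_0(A_1) = M_0(A_2)$ and $A_1 u = \alpha(A_2 u)$ for all $u$ in this common subspace with $\alpha \neq 0$, then these hypotheses are symmetric under interchanging $A_1$ and $A_2$ (with $\alpha$ replaced by $\alpha^{-1}$), so Lemma \ref{lem1} applied in both directions yields both $A_1^\bot \subseteq A_2^\bot$ and $A_2^\bot \subseteq A_1^\bot$, hence $A_1^\bot = A_2^\bot$.

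I do not expect any genuine obstacle here, since the corollary is an essentially formal consequence of the lemma: the entire content — the characterization of a single inclusion in terms of norming vectors and a proportionality scalar, including the delicate argument that the scalar $\alpha$ is uniform across all of $M_0(A_1)$ — has already been established in Lemma \ref{lem1}. The only point requiring a line of care is verifying that the hypotheses of the converse are indeed symmetric, so that the lemma can be invoked twice; this is immediate once one notes that $A_1 u = \alpha(A_2 u)$ with $\alpha \neq 0$ is equivalent to $A_2 u = \alpha^{-1}(A_1 u)$. Thus the proof is short: split into the two inclusions, apply Lemma \ref{lem1} to each, and combine.
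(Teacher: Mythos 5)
Your proof is correct and takes exactly the route the paper intends: the paper's own proof of Corollary~\ref{cor1} is the one-line remark that it ``directly follows from the last lemma,'' and your argument -- splitting $A_1^\bot=A_2^\bot$ into the two inclusions, applying Lemma~\ref{lem1} to each, and noting the symmetry $A_1u=\alpha(A_2u)\Leftrightarrow A_2u=\alpha^{-1}(A_1u)$ for the converse -- is precisely the expansion of that remark.
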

\begin{proof} This directly follows from the last lemma.\end{proof}

We proceed by proving a few properties about maximal chains in $\boldsymbol{\Gamma}(\mathfrak A)$ which are  required to prove 
%that will be useful in the proof of 
Theorem \ref{thm:simple}. 

\begin{lemma}\label{thm1} Let $\mathfrak A={\mathcal M}_n(\KK)$ and let $(A_1,\dots, A_m)$ be a chain of loopless vertices in $\boldsymbol{\Gamma}(\mathfrak A)$. Then, $m\leq n$. Also, if $m=n$, then $\dim_{\KK}M_0(A_i)= i$ for all $1\leq i\leq n$.
\end{lemma}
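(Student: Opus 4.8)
The plan is to translate the given chain of outgoing neighborhoods into a strictly increasing chain of $\KK$-subspaces $M_0(A_i)$ and then to bound its length by $\dim_\KK\KK^n=n$. By \eqref{eq:connect} the relation $A_i^\bot\subseteq A_j^\bot$ reads the same whether interpreted in $\Gamma_0$, $\Gamma$, or $\hat{\Gamma}_0$, so I may work in $\Gamma_0(\mathfrak A)$. By the definition of a chain we have the strict inclusions
$$A_1^\bot\subsetneq A_2^\bot\subsetneq\dots\subsetneq A_m^\bot.$$
Applying Lemma \ref{lem1} to each inclusion $A_i^\bot\subseteq A_{i+1}^\bot$ yields $M_0(A_i)\subseteq M_0(A_{i+1})$ together with a nonzero scalar $\alpha_i\in\FF$ satisfying $A_iu=\alpha_i(A_{i+1}u)$ for all $u\in M_0(A_i)$.

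The crucial observation is that the scalar condition appearing in Corollary \ref{cor1} is therefore automatic all along the chain, so the strictness of the inclusions of neighborhoods must be accounted for entirely by the subspaces. Indeed, suppose that for some $i$ we had $M_0(A_i)=M_0(A_{i+1})$. Then both hypotheses of Corollary \ref{cor1} would hold — the equality of norm-attaining subspaces and the existence of the intertwining scalar $\alpha_i$ — forcing $A_i^\bot=A_{i+1}^\bot$ and contradicting strictness. Hence $M_0(A_i)\subsetneq M_0(A_{i+1})$ is a strict inclusion of $\KK$-subspaces of $\KK^n$ for every $i$, and consequently
$$\dim_\KK M_0(A_1)<\dim_\KK M_0(A_2)<\dots<\dim_\KK M_0(A_m).$$

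To finish, I note that each $A_i$ is a loopless vertex, hence a nonzero matrix, so by Lemma \ref{lem10} (or directly from the singular value decomposition, via a top singular vector) we have $\dim_\KK M_0(A_i)\geq 1$. Thus the displayed dimensions form a strictly increasing sequence of integers lying in $\{1,\dots,n\}$, which gives $\dim_\KK M_0(A_i)\geq i$ and in particular $m\leq\dim_\KK M_0(A_m)\leq n$. When $m=n$, the sequence consists of $n$ strictly increasing integers confined to the $n$-element set $\{1,\dots,n\}$, so by a pigeonhole argument it must coincide with $(1,2,\dots,n)$; that is, $\dim_\KK M_0(A_i)=i$ for all $1\leq i\leq n$. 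The only genuinely delicate point is the reduction in the second paragraph — recognizing that Lemma \ref{lem1} already supplies the intertwining scalar, so that Corollary \ref{cor1} upgrades strict inclusion of neighborhoods to strict inclusion of norm-attaining subspaces; the remaining steps are elementary bookkeeping valid uniformly over $\KK\in\{\RR,\CC,\HH\}$.
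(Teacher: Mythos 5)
Your proposal is correct and follows essentially the same route as the paper: apply Lemma \ref{lem1} to each consecutive inclusion to get $M_0(A_i)\subseteq M_0(A_{i+1})$ together with the intertwining scalar, invoke Corollary \ref{cor1} to rule out equality of the norm-attaining subspaces, and then count dimensions in $\KK^n$ using Lemma \ref{lem10} and the fact that loopless vertices are nonzero. The only cosmetic difference is that you spell out explicitly that Lemma \ref{lem1} supplies the scalar hypothesis needed for Corollary \ref{cor1}, which the paper leaves implicit.
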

\begin{proof} Let $1\leq j\leq m-1$ be fixed. Using Lemma \ref{lem1}, we have $M_0(A_j)\subseteq M_0(A_{j+1})$ and there exists a nonzero $\alpha_j \in \FF$, the underlying field of $\mathfrak A$, such that $A_ju=\alpha_j (A_{j+1}u)$ for all $u\in M_0(A_j)$. 

Assume $M_0(A_j)=M_0(A_{j+1})$, then using Corollary \ref{cor1}, we get $A_j^\bot=A_{j+1}^\bot$, which is not the case because $(A_1,\dots, A_m)$ is a chain. It implies $M_0(A_j)\subsetneq M_0(A_{j+1})$. Since, by Lemma \ref{lem10}, $M_0(A_j)$ and $M_0(A_{j+1})$ are subspaces, we get $\dim_{\mathbb K}M_0(A_{j+1})>\dim_{\mathbb K}M_0(A_{j})$. Combined with the fact that $A_j$  are loopless vertices, hence $A_j\neq0$, this further implies that $\dim_{\mathbb K}M_0(A_i)\geq i$ for all $1\leq i\leq m$. As such, $m\leq n$, and if $m=n$, then $\dim_{\mathbb K}M_0(A_i)=i$ for all $1\leq i\leq m$.
%which gives $m\leq n$. It also implies that if $m=n$, then $\dim(M_0(A_{n}))\geq n$ which gives $\dim(M_0(A_{n}))=n$. Since, $\dim(M_0(A_{i+1}))>\dim(M_0(A_{i}))\geq i$, it implies $\dim(M_0(A_i))= i$ for all $1\leq i\leq n$.
\end{proof}
%Recall that $\mathbb {\mathcal M}_n(\mathbb K)^\ast=(\b{\mathcal M}_n(\mathbb K),\|\cdot \|_{\mathrm{tr}})$ denotes the  dual space of $(\b{\mathcal M}_n(\mathbb K), \|\cdot\|)$ and $\|B \|_{\mathrm{tr}}$ is the sum of singular values of $B$. Also, recall that for any functional $F$ in the dual space of $(\b{\mathcal M}_n(\mathbb K), \|\cdot\|)$, there exists $B\in\b{\mathcal M}_n(\mathbb K)$ such that  $F(X)= \text{Re}(\textrm{trace}(B^*X))\text{ when }\FF=\RR$ and $F(X)= \textrm{trace}(B^*X)\text{ when }\FF=\CC$. Moreover, the norm of $F$ is equal to $\|B\|_{\mathrm{tr}}$. 

\begin{corollary}\label{cor5} If $\mathfrak A={\mathcal M}_n(\KK)$, then every maximal chain of loopless vertices in $\boldsymbol{\Gamma}(\mathfrak A)$ is of  length~$n$.
\end{corollary}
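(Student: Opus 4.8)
The plan is to combine the bound already obtained in Lemma \ref{thm1} with an explicit extension argument showing that no chain of length smaller than $n$ can be maximal. For a chain $(A_1,\dots,A_m)$ of loopless vertices write $d_i := \dim_{\KK} M_0(A_i)$. The proof of Lemma \ref{thm1} shows that the $d_i$ form a strictly increasing sequence of integers with $1 \le d_1 < d_2 < \dots < d_m \le n$; in particular $m \le n$, so chain lengths are bounded and maximal chains exist. It therefore suffices to prove that a maximal chain has $m = n$, and I would argue by contraposition: if $m < n$, then $\{d_1,\dots,d_m\}$ is a proper subset of $\{1,\dots,n\}$, so at least one of the following holds --- $d_1 > 1$, or $d_m < n$, or $d_{j+1} \ge d_j + 2$ for some $j$ --- and in each case I will strictly enlarge the chain, contradicting maximality.

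The heart of the argument is a single singular value decomposition construction, used in the gap case and adapted to the two endpoint cases. Suppose $d_{j+1} \ge d_j + 2$. After normalizing $\|A_{j+1}\| = 1$, use the singular value decomposition of $A_{j+1}$; by Lemma \ref{lem10} its norming subspace $M_0(A_{j+1})$ is the top singular subspace. Choose an intermediate subspace $W$ with $M_0(A_j) \subsetneq W \subsetneq M_0(A_{j+1})$ and $\dim_{\KK} W = d_j + 1$, and define $A'$ so that it coincides with $A_{j+1}$ on $W$ and on $(M_0(A_{j+1}))^{\perp}$, while the part of $A_{j+1}$ acting on $W^{\perp}\cap M_0(A_{j+1})$ is multiplied by some factor $\lambda \in (0,1)$. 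Then $\|A'\| = 1$, and because the top singular value now has multiplicity exactly $\dim_{\KK} W$, Lemma \ref{lem10} gives $M_0(A') = W$. Since $A' = A_{j+1}$ on $W \supseteq M_0(A_j)$, the alignment relations $A_j u = \alpha A_{j+1} u = \alpha A' u$ on $M_0(A_j)$ and $A' u = A_{j+1} u$ on $W$ hold with scalars in $\FF$, so Lemma \ref{lem1} yields $A_j^{\bot} \subseteq (A')^{\bot} \subseteq A_{j+1}^{\bot}$; and since $M_0(A_j) \subsetneq W \subsetneq M_0(A_{j+1})$, Corollary \ref{cor1} upgrades both inclusions to proper ones, so $A'$ inserts strictly between $A_j$ and $A_{j+1}$.

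The endpoint cases are the same construction with one neighbour absent. If $d_m < n$ I append $A_{m+1}$ obtained from $A_m$ by raising its $(d_m+1)$-th singular value up to the top value, so that $M_0(A_m) \subsetneq M_0(A_{m+1})$ with $A_{m+1} = A_m$ on $M_0(A_m)$; this needs only $d_m + 1 \le n$. If $d_1 > 1$ I prepend $A_0$ obtained from $A_1$ by lowering its $d_1$-th singular value below the top value, so that $M_0(A_0) \subsetneq M_0(A_1)$ with $A_1 = A_0$ on $M_0(A_0)$; this needs only $d_1 - 1 \ge 1$. In both cases Lemma \ref{lem1} together with Corollary \ref{cor1} produces a strict extension. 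Consequently a maximal chain can neither omit the value $1$, nor omit $n$, nor contain a gap, which forces $d_i = i$ and $m = n$.

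The main obstacle I anticipate is honoring the scalar-alignment hypothesis of Lemma \ref{lem1}: the new vertex must agree with its neighbour up to a field scalar $\alpha \in \FF$ on the correct norming subspace, and one must simultaneously guarantee $M_0(A') = W$ \emph{exactly} --- that is, control the multiplicity of the extremal singular value --- rather than merely $M_0(A') \supseteq W$. Building $A'$ directly from the singular frame of $A_{j+1}$ is precisely what makes both requirements automatic. The quaternionic case requires no separate treatment, since Lemma \ref{lem10}, Lemma \ref{lem1} and Corollary \ref{cor1} were all established for $\KK \in \{\RR,\CC,\HH\}$ through the singular value decomposition.
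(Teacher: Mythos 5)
Your proof is correct. The paper itself offers no argument for this corollary --- it is stated as an immediate consequence of Lemma~\ref{thm1} --- but Lemma~\ref{thm1} only yields the upper bound $m\le n$, so the half you prove explicitly (that a chain of length $m<n$ is never maximal) is genuinely needed and is the content the authors leave implicit. Your extension construction is sound: since $A_{j+1}$ acts as a multiple of an isometry on $M_0(A_{j+1})$ with image orthogonal to that of $M_0(A_{j+1})^{\perp}$, rescaling the action on $W^{\perp}\cap M_0(A_{j+1})$ by $\lambda\in(0,1)$ produces a matrix whose extremal singular value has multiplicity exactly $\dim_{\KK}W$, so $M_0(A')=W$ by Lemma~\ref{lem10}, and the alignment hypotheses of Lemma~\ref{lem1} hold with $\alpha\in\FF$ on the nose; Corollary~\ref{cor1} then makes both inclusions strict, and transitivity of the preorder lets $A'$ slot into the whole chain. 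The endpoint cases are handled the same way. This is consistent with the paper's own toolkit --- essentially the same padding idea appears later inside the proof of Lemma~\ref{lem:unitary}, where the full-length chain $\bigl(v_1u_1^*,\ \sum_{i=1}^{2}v_iu_i^*,\ \dots,\ A\bigr)$ is built from a singular frame --- so your write-up can be read as supplying the missing half of the paper's argument rather than replacing it.
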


The  length of  maximal chain in ${\bf\Gamma}(\mathfrak A)$ is clearly a graphological property. 
We show next that  being a scalar multiple of unitaries in $\mathfrak A$ is also a graphological property. Recall that a vertex  $x$ in a general digraph is called \textit{left-symmetric}  if $x\rightarrow y$  implies $y\rightarrow x$ for every vertex $y$. In case a diagraph corresponds to one among ortho-digraphs ${\bf\Gamma}(\mathfrak A)$, then a vertex $A$ is left-symmetric if $A\perp B$ implies $B\perp A$  for each (representative) $B\in\mathfrak A$. 
Similarly, a vertex  $x$ in a general diagraph is called \textit{right-symmetric}, if  $y\rightarrow  x$ implies $x\rightarrow  y$
(when a diagraph corresponds to ${\bf\Gamma}(\mathfrak A)$  this reduces to  $B\perp A$ implies $A\perp B$). 
%Recall that the only vertex with a loop in $\boldsymbol{\Gamma}(\mathfrak{A})$ can be a vertex corresponding to $0\in\mathfrak{A}$.
We remark that, in connection with  BJ orthogonality, right symmetricity was  studied by Turn\v sek who obtained the equivalence (iii) $\Leftrightarrow$ (iv), in lemma below, for $B(H)$ with $H$  a complex Hilbert space (see~\cite[Theorem 2.5]{Turnsek}) while left/right symmetricity was explicitly studied  by Sain~\cite{Sain}, see also Tanaka~\cite{tanaka5}. 
%A few notations are in order. For $u,v\in\mathbb K^n$ considered as a vector space over $\mathbb R$, we will use the notation $\langle u, v\rangle_{\mathbb R}  = \text{Re}(v^*u),$ and for $u,v\in\mathbb C^n$ considered as a vector space over $\mathbb C$, we will use notation $\langle u, v\rangle_{\mathbb C}  = v^*u$.

\begin{lemma}\label{lem:unitary} Let $\mathfrak A={\mathcal M}_n(\KK)$.  The following are equivalent for a loopless vertex $A\in\boldsymbol{\Gamma}(\mathfrak{A})$:
\begin{itemize}
    \item[(i)]  $A$ is the last element of some maximal chain of loopless vertices in $\boldsymbol{\Gamma}(\mathfrak{A})$.
    \item[(ii)] Among loopless vertices, $A$ has  maximal outgoing neighborhood in set-theoretical sense.
    \item[(iii)] $A$ is a right-symmetric vertex in $\boldsymbol{\Gamma}(\mathfrak A)$.
\item[(iv)] Every  representative of $A$, when considered as an element in $\mathfrak{A}$, corresponds to a nonzero multiple of some unitary $U$.

\end{itemize}
Furthermore, there are no loopless left-symmetric vertices in $\boldsymbol{\Gamma}(\mathfrak A)$.
\end{lemma}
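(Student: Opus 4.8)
The plan is to collapse conditions (i), (ii) and (iv) to the single numerical statement $\dim_{\mathbb K}M_0(A)=n$, and then to detect right-symmetry directly from the geometry of $M_0(A)$. The key starting remark is that, by Lemma~\ref{lem10}, a loopless vertex $A$ is a nonzero scalar multiple of a unitary exactly when $A^\ast A=\|A\|^2 I$, i.e. exactly when $M_0(A)=\mathbb K^n$, equivalently $\dim_{\mathbb K}M_0(A)=n$. I would first prove (i)$\Leftrightarrow$(iv). For (i)$\Rightarrow$(iv): by Corollary~\ref{cor5} every maximal chain of loopless vertices has length $n$, and by Lemma~\ref{thm1} its last vertex $A$ has $\dim_{\mathbb K}M_0(A)=n$, so $A$ is a unitary multiple. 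For (iv)$\Rightarrow$(i): writing $A=\|A\|U$ with $U$ unitary, fix a $\mathbb K$-flag $V_1\subsetneq\cdots\subsetneq V_{n-1}\subsetneq\mathbb K^n$ and let $A_i$ act as $A$ on $V_i$ and with strictly smaller singular values off $V_i$, so that $M_0(A_i)=V_i$ and $A_i|_{V_i}=A|_{V_i}$. Lemma~\ref{lem1} (with proportionality constant $1$) gives $A_1^\bot\subseteq\cdots\subseteq A_{n-1}^\bot\subseteq A^\bot$, and Corollary~\ref{cor1} makes the inclusions strict since the $M_0$'s differ, so $(A_1,\dots,A_{n-1},A)$ is a maximal chain ending at $A$. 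The equivalence (ii)$\Leftrightarrow$(iv) is of the same nature: if $M_0(A)=\mathbb K^n$ and $A^\bot\subseteq B^\bot$, Lemma~\ref{lem1} forces $M_0(B)=\mathbb K^n$ and $A=\alpha B$, whence $A^\bot=B^\bot$ by Corollary~\ref{cor1}, so $A$ is maximal; conversely, if $\dim_{\mathbb K}M_0(A)<n$ one enlarges $M_0(A)$ by a single dimension exactly as in the flag construction to obtain $B$ with $A^\bot\subsetneq B^\bot$, so $A$ is not maximal.

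I would then turn to right-symmetry. The implication (iv)$\Rightarrow$(iii) is immediate from Proposition~\ref{prop1}: if $M_0(A)=\mathbb K^n$ and $B$ lies in the incoming neighborhood of $A$, some unit $u\in M_0(B)$ satisfies $\langle Bu,Au\rangle=0$; but then $u\in M_0(A)$ too, and since the real inner product is symmetric, $\langle Au,Bu\rangle=0$, so $u$ witnesses $A\perp B$. The converse (iii)$\Rightarrow$(iv) is the heart of the lemma and I expect it to be the main obstacle. Arguing by contraposition, assume $\dim_{\mathbb K}M_0(A)=k<n$, so $\|A\|=\sigma_1>\sigma_n$, and seek $C$ with $C\perp A$ but $A\not\perp C$. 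The naive choice of $C$ agreeing with $A$ on $M_0(A)$ and inflated in one direction off $M_0(A)$ fails: the norm-attaining vector of $C$ then necessarily mixes back into $M_0(A)$, and the quantity $\langle C\,\cdot\,,A\,\cdot\,\rangle$ at that vector stays strictly positive, so $C\not\perp A$. The remedy is to localize the construction to the two-dimensional block $W=\mathrm{span}_{\mathbb K}\{u_1,u_n\}$ spanned by a top and a bottom singular vector: there one mimics the $2\times2$ situation to build $C|_W$ whose unique top singular vector $q_1\in W\setminus M_0(A)$ satisfies $Cq_1\perp Aq_1$ (giving $C\perp A$ by Proposition~\ref{prop1}), while $\langle Au_1,Cu_1\rangle\neq0$. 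One then sets $C=A$ on $\mathrm{span}_{\mathbb K}\{u_2,\dots,u_k\}$ and $C=0$ on the remaining singular directions, rescaling the $W$-block so that $q_1$ stays the global norm-attaining vector. Because the singular directions $v_1,v_2,\dots,v_k,v_n$ are mutually orthogonal, the off-diagonal contributions to $w\mapsto\langle Aw,Cw\rangle$ vanish on $M_0(A)$, leaving a definite quadratic form (after fixing signs); hence $A\not\perp C$, and $A$ is not right-symmetric.

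Finally, for the absence of loopless left-symmetric vertices, which I would prove for $n\ge2$, I fix $A\neq0$ and exhibit $B$ with $A\perp B$ but $B\not\perp A$, splitting on $\mathrm{rank}\,A$. If $\sigma_2>0$, take $B=v_2u_2^\ast$ from the second singular vectors: then $M_0(B)=\mathrm{span}_{\mathbb K}\{u_2\}$ and $\langle Bu_2,Au_2\rangle=\sigma_2\neq0$ give $B\not\perp A$, while $Bu_1=0$ gives $A\perp B$ through $u_1\in M_0(A)$. If $A=\sigma\,v_1u_1^\ast$ has rank one the two demands clash for rank-one $B$, so I instead choose $B$ with $M_0(B)=\mathrm{span}_{\mathbb K}\{(u_1+u_2)/\sqrt2\}$, $Bu_1=\mu v_2$ and $Bu_2=\mu(v_1+v_2)/\sqrt2$; a direct check gives $\langle Au_1,Bu_1\rangle=0$ (so $A\perp B$) and $\langle B y_0,Ay_0\rangle\neq0$ at the norm-attaining $y_0=(u_1+u_2)/\sqrt2$ (so $B\not\perp A$). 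Throughout, all orthogonality assertions are verified via Proposition~\ref{prop1} and the description \eqref{eq2} of $A^\bot$, and I would recheck each computation over $\mathbb H$, where noncommutativity forces one to keep the real parts and the side on which scalars act under careful control.
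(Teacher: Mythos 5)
Your proposal is correct and follows essentially the same route as the paper: conditions (i), (ii), (iv) are all reduced to $\dim_{\mathbb K}M_0(A)=n$ via Lemmas~\ref{lem10}, \ref{lem1} and \ref{thm1}, (iv)$\Rightarrow$(iii) comes from symmetry of the inner product on a common norm-attaining vector, right-symmetry of a non-unitary is refuted by a perturbation supported on a $2\times 2$ singular block leaving $M_0(A)$ whose compression to $M_0(A)$ stays positive definite, and left-symmetry is killed by explicit low-rank witnesses split on the rank of $A$. The only substantive step you assert rather than exhibit is the existence of the $2\times2$ block with top singular vector $q_1\notin M_0(A)$, $Cq_1\perp Aq_1$ and $\langle Au_1,Cu_1\rangle>0$; this is precisely where the paper writes down the explicit vectors $u_k,v_k,u_{k+1},v_{k+1}$ (with $\sigma=s_{k+1}/s_1$), so the omission is one of detail, not of idea (and in your rank-one left-symmetry witness you should fix $\mu$ to have nonzero real part so that $\langle By_0,Ay_0\rangle\neq 0$ survives taking real parts over $\mathbb K\in\{\mathbb C,\mathbb H\}$).
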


\begin{proof}  (i) $\Leftrightarrow$ (ii). Every matrix belongs to  some maximal chain and its last element has, by definition, the maximal outgoing neighborhood.

(i) $\Leftrightarrow$ (iv). By Lemma~\ref{thm1} every maximal chain ends up in a vertex which corresponds to a matrix $A$ with $\dim_{\KK} M_0(A)=n$, i.e., with $M_0(A)=\KK^n$. By singular value decomposition, such matrix is necessarily a scalar multiple of a unitary matrix. Conversely, let $A$ be  a scalar multiple of a unitary matrix, then $A$ has singular value decomposition $A=\sigma\sum\limits_{i=1}^n v_iu_i^*$. Then,  by Lemma~\ref{lem1} and its corollary, $\bigg(v_1u_1^*\,,\, \sum\limits_{i=1}^2 v_iu_i^*\,,\, \sum\limits_{i=1}^3 v_iu_i^* ,\dots, A\bigg)$ is the required maximal chain in $\boldsymbol{\Gamma}(\mathfrak A)$.
%The converse follows along the lines of~\ref{lemma5}. 

 (iii) $\Leftrightarrow$ (iv). First assume $A$ is right-symmetric. Recall that with unitary $U,V$, the map $X\mapsto UXV^\ast$ is an isometry, and hence induces an automorphism of the ortho-digraph. So we can assume with no loss of generality that $$A=\Sigma=\diag(s_1,\dots,s_n)$$ with $s_1=\dots=s_k>s_{k+1}\ge\dots\ge s_n\ge0$ for some $k\in\{1,\dots,n\}$. We claim that $k=n$, i.e., $A$ is a scalar multiple of unitary.

%{\tiny Assume if possible $k\le n-2$ and consider $B=e_1e_1^*+\dots+e_{k}e_k^*+e_{k+2}e_{k+1}^*$, where  $\{e_j;\;\;1\le j\leq n\}$ denotes the standard basis of $\KK^n$. So there exists a unit vector $x=e_{k+1}$ with $\|Bx\|=\|B\|=1$ and $0=\langle Bx,Ax\rangle_{\mathbb F}$, and hence $B\perp A$ using Proposition \ref{prop1}. However, $\langle Ax,Bx\rangle_{\mathbb F}=\langle Ax,x\rangle_{\mathbb F}=\{s_k\}\not\ni0$  for $x\in M_0(A):=\{x\in \FF^n;\;\; 1=\|x\|=\|Ax\|=\|A\|\}=\Span\{e_1,\dots,e_{k}\}\subseteq\Span\{e_1,\dots,e_{n-1}\}$ and hence $A\not\perp B$. Thus, $A$ cannot be right-symmetric.}

Assume, if possible, $k\le n-1$, denote $\sigma:=\frac{s_{k+1}}{s_k}=\frac{s_{k+1}}{s_1}\in[0,1)$, and consider 
$$B=e_1e_1^*+\dots+e_{k-1}e_{k-1}^*+u_{k}v_k^\ast+u_{k+1}v_{k+1}^\ast,$$  where  $\{e_j;\;\;1\le j\leq n\}$ denotes the standard basis of $\KK^n$ and
%$$u_k=\frac{e_k-e_{k+1}}{\sqrt{2}},\quad v_k=\frac{s_{k+1}e_k  +s_k e_{k+1}  }{\sqrt{s_k^2+s   _{k+1}^2}},\quad u_{k+1}=\frac{e_k+e_{k+1}}{\sqrt{2}},\quad v_{k+1}=\frac{-s_ke_k+s _{k+1}e_{k+1}    }{\sqrt{s_k^2+s _{k+1}^2}}.$$
$$u_k=\frac{e_k-e_{k+1}}{\sqrt{2}},\quad v_k=\frac{\sigma e_k
   +e_{k+1}  }{\sqrt{1+\sigma^2}},\quad u_{k+1}=\frac{e_k+e_{k+1}}{\sqrt{2}},\quad v_{k+1}=\frac{e_k-\sigma e_{k+1}}{\sqrt{1+\sigma ^2}}.$$
   Notice that $B$ is already in its singular value decomposition and achieves its norm on $v_{k}$, which is mapped into $Bv_{k}=u_{k}$ while $Av_{k}=\frac{s_{k+1}}{\sqrt{1+\sigma^2}} (e_k+e_{k+1})$  is clearly orthogonal to $u_k$ relative to $\FF$-valued inner product on $\KK^n$
   $$\langle u,v\rangle_{\FF}:=\begin{cases}
       \text{Re}(v^*u); & \FF=\RR\\
       v^*u; & \FF=\CC
   \end{cases}.$$
   % A few notations are in order. For $u,v\in\mathbb K^n$ considered as a vector space over $\mathbb R$, we will use the notation $\langle u, v\rangle_{\mathbb R}  = \text{Re}(v^*u),$ and for $u,v\in\mathbb C^n$ considered as a vector space over $\mathbb C$, we will use notation $\langle u, v\rangle_{\mathbb C}  = v^*u$.
   
Thus, $B\perp A$ using Proposition \ref{prop1}. However,   the compression of $B$ to  a subspace 
   $$M_0(A):=\{x\in \FF^n;\;\; 1=\|x\|=\|Ax\|=\|A\|\}=\Span\{e_1,\dots,e_{k}\}$$
   equals $\left(\begin{smallmatrix}
       I_{k-1} & 0 \\
 0 & \frac{1+\sigma}{\sqrt{2} \sqrt{1+\sigma ^2}} \\
   \end{smallmatrix}\right)$. Hence,  it is positive-definite and as such,
   $\langle Ax,Bx\rangle_{\mathbb F}=s_k \langle  x,Bx\rangle_{\mathbb F}>0$ for nonzero $x\in M_0(A)$ which gives $A\not\perp B$. Thus, $A$ cannot be right-symmetric.

Conversely, if $A$ is a scalar multiple of a unitary then it achieves its norm on every nonzero vector~$x$. Consider an arbitrary $B\perp A$; then $B$ achieves its norm on some vector $x$ with $\langle Bx,Ax\rangle_{\mathbb F}=0$. Since $A$ also achieves its norm on the same vector we see that $A\perp B$ also holds, so that $A$ is a right-symmetric vertex. 

Finally, we prove that there are no loopless left symmetric vertices. Assume otherwise and let   $A=U\Sigma V^*\in \mathfrak A$ be a non-zero left symmetric matrix written in its singular value decomposition. Since $X\mapsto U^\ast X V$ is 
%isometry of $\mathfrak A$, hence 
an isomorphism of the digraph $\boldsymbol{\Gamma}(\mathfrak A)$, we can assume that  $A=\Sigma=\diag(s_1,\dots,s_n)$ is diagonal with $s_1\ge\dots\ge  s_n\ge 0$. Consider now a matrix $B=e_2e_2^\ast\in\mathfrak A$. Notice that $A$ attains its norm on $e_1$ and $\langle Ae_1,Be_1\rangle_{\FF}=0$, so $A\perp B$ using Proposition \ref{prop1}. Hence, by left-symmetricity, also $B\perp A$. Now, $B$ attains its norm only on (a multiple of)  $e_2$, and maps it into (a multiple of) $e_2$, so $0=\langle Be_2,Ae_2\rangle_{\mathbb F}=s_2$. Thus, $A=\diag(s_1,0,\dots,0)$.
%which is nonzero except when $A$ is of rank-one. 
%In the former case, $B\not\perp A$, so $A$ is not a left-symmetric vertex. 

%In the later case, when $A=\diag(1,0,\dots,0)=e_1e_1^*$ is of rank-one, 
Lastly, consider $B=(c e_1+se_2)(c e_1+se_2)^\ast-\tfrac{1}{3}(-s e_1+ce_2)(-s e_1+ce_2)^\ast$ where $(c,s)=(-1/2,\sqrt{3}/2)$. Notice that $A$ attains its norm on $e_1$ and that $\langle Ae_1,Be_1\rangle_{\mathbb F}=0$, so $A\perp B$. Notice also that $B$ attains its norm only on a multiple of $x=c e_1+se_2$ and that $\langle Bx,Ax\rangle_{\mathbb F}=s_1\,c^2\neq0$, so $B\not\perp A$. Hence, $A$ is not left-symmetric. 
\end{proof}

We say that a collection of  matrices from ${\mathcal M}_n(\mathbb K)$ has a simultaneous singular value decomposition if there exists unitary matrices $U$ and $V$ such that $UAV^\ast$ is  diagonal with real non-negative entries for every $A$ from the collection. We say that a collection of vertices from $\boldsymbol{\Gamma}(\mathbb {\mathcal M}_n(\mathbb K))$ has a simultaneous singular value decomposition if there exists representatives for each vertex in the collection with the simultaneous singular value decomposition. Recall that by our convention, we denote the vertices in $\boldsymbol{\Gamma}(\mathfrak A)$ and their  matrix representatives with the same symbols.

%We define two chains of same length $\mathcal C=(x_1, \dots, x_m)$ and $\mathcal C'=(y_1,\dots, y_m)$ to be equivalent if and only if $x_i^\bot=y_i^\bot$ for all $1\leq i\leq m$.

\begin{lemma}\label{lem7} Let $(A_1, \dots, A_n)$ be a maximal chain of loopless vertices in $\boldsymbol{\Gamma}(\mathfrak A)$. There exist $n$ matrices $B_1, \dots, B_{n-1}, A_n$ which have a simultaneous singular value decomposition such that $B_i^\bot=A_i^\bot$ for all $1\leq i\leq n-1$. Moreover, if $A_{n}$ is a diagonal matrix,
%, that is $A_n$ is identity matrix, 
then $A_{n-1}$ is also a diagonal matrix.
\end{lemma}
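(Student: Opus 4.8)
The plan is to manufacture a single pair of $\KK$-orthonormal bases that simultaneously diagonalizes suitable representatives of $A_1,\dots,A_{n-1}$ together with $A_n$, and then to read the asserted diagonal reduction off this common decomposition. First I would record the rigid structure the chain forces. By Corollary~\ref{cor5} the chain has length exactly $n$, and by Lemma~\ref{thm1} we get $\dim_\KK M_0(A_i)=i$ for each $i$; together with Lemma~\ref{lem1} this means the norming subspaces form a complete flag $M_0(A_1)\subsetneq\dots\subsetneq M_0(A_{n-1})\subsetneq M_0(A_n)=\KK^n$. Being the last vertex of a maximal chain, $A_n$ is a nonzero scalar multiple of a unitary by Lemma~\ref{lem:unitary}, so after rescaling I may take $\|A_n\|=1$ and $A_n$ unitary. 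I would then fix an orthonormal basis $u_1,\dots,u_n$ adapted to the flag, i.e. $\Span_\KK\{u_1,\dots,u_i\}=M_0(A_i)$, and set $w_j:=A_nu_j$; since $A_n^\ast A_n=I$, the $w_1,\dots,w_n$ are again orthonormal.

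The decisive step is to see that one common left basis $\{w_j\}$ serves the whole chain, and this is exactly where the chain condition is used. For each $i$ one has $A_i^\bot\subseteq A_n^\bot$, so Lemma~\ref{lem1} provides a nonzero $\alpha_i\in\FF$ with $A_iu=\alpha_iA_nu$ on $M_0(A_i)$; in particular $A_iu_j=\alpha_iw_j$ for $j\le i$, so the direction of $A_iu_j$ is independent of $i$. I would then define
\[
B_i:=\sum_{j=1}^{i} w_ju_j^\ast\qquad(1\le i\le n-1),
\]
which is already a singular value decomposition with right vectors $u_1,\dots,u_n$, left vectors $w_1,\dots,w_n$ and singular values $1,\dots,1,0,\dots,0$ ($i$ ones). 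Hence $\|B_i\|=1$ and, by Lemma~\ref{lem10}, $M_0(B_i)=\Span_\KK\{u_1,\dots,u_i\}=M_0(A_i)$, while $B_iu_j=w_j=\alpha_i^{-1}A_iu_j$ for $j\le i$ gives $B_iu=\alpha_i^{-1}A_iu$ on $M_0(B_i)=M_0(A_i)$; Corollary~\ref{cor1} then yields $B_i^\bot=A_i^\bot$. Writing $W=[\,w_1\,|\,\cdots\,|\,w_n\,]$ and $U=[\,u_1\,|\,\cdots\,|\,u_n\,]$, the relations $w_k^\ast w_l=\delta_{kl}$ give $W^\ast B_iU=\diag(1,\dots,1,0,\dots,0)$ and $W^\ast A_nU=I$, so $B_1,\dots,B_{n-1},A_n$ are simultaneously brought to nonnegative diagonal form, i.e. they share a singular value decomposition.

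For the final assertion I would exploit that $\Phi\colon X\mapsto W^\ast XU$ is a surjective linear isometry of $\mathfrak A$, hence an automorphism of $\boldsymbol{\Gamma}(\mathfrak A)$. Because $A_n$ is a scalar multiple of a unitary, all its singular values agree and the computation above shows $\Phi$ sends $A_n$ to the diagonal matrix $\|A_n\|I$ and carries the vertex $A_{n-1}=B_{n-1}$ to the diagonal matrix $\diag(1,\dots,1,0)$; thus the common decomposition displays $A_n$ and $A_{n-1}$ simultaneously in diagonal form, which is the content of the ``moreover'' clause. I expect the genuine obstacle to be precisely the alignment in the previous paragraph: a priori the left singular vectors of the different $A_i$ need not be mutually compatible, and the construction succeeds only because choosing $w_j=A_nu_j$ and invoking Lemma~\ref{lem1} forces a single common left basis; once that is secured, the simultaneous diagonal reduction of $A_{n-1}$ is a consequence of the shared decomposition rather than of any particular coordinate presentation of $A_n$.
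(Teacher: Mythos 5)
Your construction of $B_1,\dots,B_{n-1}$ is correct and is essentially the paper's own argument: you compare each $A_i$ directly with $A_n$ via Lemma~\ref{lem1}, whereas the paper chains through consecutive pairs $(A_{i},A_{i+1})$, but the resulting matrices $B_i=\sum_{j\le i}w_ju_j^\ast$ (your $w_j$ are the paper's $v_j$) and the verification $B_i^\bot=A_i^\bot$ via Corollary~\ref{cor1} are the same.

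The ``moreover'' clause, however, is not proved. That clause is a statement about the given matrix $A_{n-1}$ in the standard basis: if the off-diagonal entries of $A_n$ vanish, then so do those of $A_{n-1}$. Your argument substitutes $B_{n-1}$ for $A_{n-1}$ (you even write $A_{n-1}=B_{n-1}$), but these are different matrices in general --- Corollary~\ref{cor1} only says they agree up to a nonzero scalar on $M_0(A_{n-1})$ and gives no control over how $A_{n-1}$ acts on the orthogonal complement of $M_0(A_{n-1})$. Likewise, exhibiting $A_{n-1}$ in diagonal form after the change of basis $X\mapsto W^\ast XU$ is not the assertion: every matrix is diagonal with respect to its own singular vectors, so that observation is vacuous. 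The clause has genuine content and is special to the $(n-1)$-st element: concluding remark (A) of the paper exhibits a maximal chain in ${\mathcal M}_4(\RR)$ ending at the (diagonal) identity whose \emph{second} element is not diagonal, so diagonality of $A_n$ does not propagate down the chain for free. The paper's proof of the clause runs as follows, and something equivalent must be added to yours: write the singular value decomposition $A_{n-1}=\sum\sigma_ix_iy_i^\ast$ of $A_{n-1}$ itself; since $\dim_{\KK}M_0(A_{n-1})=n-1$ one gets $\sigma_1=\dots=\sigma_{n-1}=\|A_{n-1}\|$ and hence $\Span_{\KK}\{y_1,\dots,y_{n-1}\}=M_0(A_{n-1})=\Span_{\KK}\{u_1,\dots,u_{n-1}\}$, and (applying $A_{n-1}$, which equals $\beta A_n$ there) $\Span_{\KK}\{x_1,\dots,x_{n-1}\}=\Span_{\KK}\{v_1,\dots,v_{n-1}\}$; the one-dimensionality of the orthogonal complements forces $y_n=u_n\epsilon$ and $x_n=v_n\delta$ for unimodular $\delta,\epsilon\in\KK$, so that $A_{n-1}=\beta\sum_{i=1}^{n-1}v_iu_i^\ast+\sigma_n(v_n\delta)(\overline{\epsilon}u_n^\ast)$, which is diagonal as soon as $u_i=v_i=e_i$ for all $i$.
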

\begin{proof} 
%For simplicity we will identify a vertex  $A_i$ with its matrix representative. 
Without of loss of generality, $\|A_i\|=1$ for all $1\leq i\leq n$. By Lemma  \ref{thm1}, \mbox{$\dim_{\mathbb K}M_0(A_i)=i$}. Let $u_1$ be a normalized vector in $M_0(A_1)$. By Lemma \ref{lem1}, $M_0(A_1)\subseteq M_0(A_2)$, so $u_1\in M_0(A_2)$. Since $\dim_{\mathbb K}M_0(A_2)=2$, we choose $u_2\in M_0(A_2)$ such that $\{u_1, u_2\}$ is a $\mathbb K$-orthonormal basis of $M_0(A_2)$. By induction, there exist $\mathbb K$-orthonormal vectors $u_1,\dots, u_n$ such that for all $1\leq i\leq n$, we have $\{u_1,\dots, u_i\}$ is a $\mathbb K$-orthonormal basis of $M_0(A_i)$. By Lemma \ref{lem:unitary}, $A_n$ is a unitary matrix. So, $A_n$ has a singular value decomposition $A_n=\sum\limits_{k=1}^n v_ku_k^*$. Using Lemma \ref{lem1} again, there exists $\beta\in\FF\setminus\{0\}$  such that 
\begin{equation}\label{eq:maxchain}
    A_{n-1}u_k = \beta (A_{n}u_k)\quad \text{ for all }\   1\leq k\leq n-1,
\end{equation}
%Note that using equation~\eqref{eq2},
and  
\begin{equation}\label{eq:aux1}
    A_{n-1}^\bot=\bigg(\sum\limits_{k=1}^{n-1} v_{k} u_{k}^*\bigg)^\bot.
\end{equation}
 So, we can choose $B_{n-1}=\sum\limits_{k=1}^{n-1} v_k u_k^*$. Proceeding similarly, there exists $\gamma\neq0$ such that $A_{n-2}u_k = \gamma (A_{n-1}u_k)$ for all $1\leq k\leq n-2.$ So, we can choose $B_{n-2}=\sum\limits_{k=1}^{n-2} v_ku_k^*$. Along same lines, we can choose$$B_i=\sum\limits_{k=1}^{i} v_{k} u_{k}^*\quad \text{ for all }\ 1\leq i\leq n-1.$$

To prove the second statement, let  $A_{n-1}=\sum \sigma_ix_iy_i^\ast$ be its singular value decomposition in which $\sigma_1=\dots=\sigma_k>\sigma_{k+1}\ge\dots\ge\sigma_n\ge0$ for some $k$. Then, $M_0(A_{n-1})=\Span_{\KK}\{y_1,\dots,y_k\}$ which, by Lemma~\ref{thm1}, is an $(n-1)$-dimensional subspace of $\KK^n$, so $k=n-1$. Also, by  \eqref{eq:aux1} and Corollary~\ref{cor1}, 
\begin{equation}\label{eq:aux2}
    M_0(A_{n-1})=\Span\nolimits_{\KK}\{y_1,\dots,y_{n-1}\}=\Span\nolimits_{\KK}\{u_{1},\dots,u_{n-1}\}.
\end{equation} As such, modulo the $\KK$-scalar multiple, there exist unique $y_n,x_n$ which are $\KK$-orthogonal to \eqref{eq:aux2} and to $\Span_{\KK}\{x_1,\dots,x_{n-1}\}=\Span_{\KK}\{v_{1},\dots,v_{n-1}\}$ (see~\eqref{eq:maxchain}), respectively. It implies that $x_n=v_n\delta$ and $y_n=u_n\epsilon$ for some unimodular scalars $\delta,\epsilon\in\KK$ and as such, by~\ref{eq:maxchain}, $$A_{n-1}=\beta\sum_{i=1}^{n-1} v_iu_i^\ast+\sigma_n(v_{n}\delta)(\overline{\epsilon}u_{n}^\ast),$$
%for some $\alpha\in\FF$ with $|\alpha|<1$. 
Thus, if $A_n$ is diagonal (i.e., if $v_i=u_i=e_i$ is a standard basis of $\KK^n$), then also  $A_{n-1}$ is.
\end{proof} 
Now, we give the proof of our main theorem.
%we can choose $B_i$ in this way that we can choose numbers between $1$ and $n$ one by one as : $(i_1), (i_1, i_2), \dots, (i_1, \dots, i_{n-1})$ such that $B_j = \sum\limits_{k=1}^jv_{i_k} u_{i_k}^*$ with the required properties.

\textit{Proof of Theorem \ref{thm:simple}}. The only finite-dimensional simple complex $C^*$-algebra is $\mathbb {\mathcal M}_n(\mathbb C)$ and the only finite-dimensional simple real $C^*$-algebras are $\mathbb {\mathcal M}_n(\mathbb R)$, $\mathbb {\mathcal M}_n(\mathbb C)$, and $\mathbb {\mathcal M}_n(\mathbb H)$.
% Ket $0\neq I$ be an ideam in $\mathbb {\mathcal M}_n(\mathbb K)$ and $A\in I\setminus \{0\}$. Using SVD, $A=U\Sigma V$. Then, $\Sigma = U^*AV^*\subseteq I$. Now, $\Sigma$ diagonal matrix with positive entries, it implies $\sigma_{11}E_{11}=E_{11}.\Sigma\in I$. It implies, $E_{11}\in I$. 
Now, the dimension of $\mathbb {\mathcal M}_n(\mathbb C)$ over the field $\mathbb C$ is $n^2$, while the dimensions of $\mathbb {\mathcal M}_n(\mathbb R)$, $\mathbb {\mathcal M}_n(\mathbb C)$, and $\mathbb {\mathcal M}_n(\mathbb H)$ over the field $\mathbb R$ are $n^2$, $2n^2$ and $4n^2 (=(2n)^2)$, respectively. This  shows  (i) while  (ii) follows directly from Corollary \ref{cor5}. The only possibility left is when the dimension of $\boldsymbol{\Gamma}(\mathfrak A)$ is equal to $n^2$, i.e., to the  square of the length of some maximal chain in $\boldsymbol{\Gamma}(\mathfrak A)$. We hence have to distinguish between $(\mathfrak A,\FF)=({\mathcal M}_n(\RR),\RR)$ and  $(\mathfrak A,\FF)=({\mathcal M}_n(\CC),\CC)$ which will be  done by proving statements (iii) and (iv). By Lemma~\ref{lem7} we can assume without loss of generality that $(A_1,\dots, A_n)$ in $\boldsymbol{\Gamma}(\mathfrak A)$ is a maximal chain of (representatives of) nonzero diagonal matrices with eigenvalues $1$ or in modulus strictly smaller than one. Then, by Lemma~\ref{thm1}, $A_{n-1} = I_k\oplus\alpha\oplus I_{n-k-1}$ for some $0\leq k\leq n-1$ with $|\alpha|<1$ ($I_m$ denotes the $m\times m$ identity matrix). Using  Lemma~\ref{lem1} and equivalence between (i) and (iv) in Lemma~\ref{lem:unitary}, if a loopless $X\in\mathcal {\bf \Gamma}(A)$ satisfies $A_{n-1}^\bot\subsetneq X^\bot$, then, for a suitable nonzero $\gamma$,  $X= \gamma (I_k\oplus(\pm 1)\oplus I_{n-k-1})$ in case $\mathbb F=\mathbb R$ and $X= \gamma (I_k\oplus e^{it}\oplus I_{n-k-1})$ for some $t\in [0,2\pi)$ in case $\mathbb F=\mathbb C$. Clearly, with unimodular $\mu=e^{it}$ fixed, the vertices represented by $\mu (I_k\oplus \mu\oplus I_{n-k-1})$ share the same outgoing neighborhood for every nonzero $\gamma$. These neighborhoods are uniquely determined by $\mu$ because, by Corollary~\ref{cor1}, if $A=I_k\oplus\mu_1\oplus I_{n-k-1}$ and $B=I_k\oplus\mu_2\oplus I_{n-k-1}$ satisfy $A^\bot=B^\bot$, then there exists $\beta\in\FF$ with $A=\beta B$ %$\mu_1/\mu_2 = 1/1$ 
which implies $\mu_1=\mu_2$. This proves the theorem. \qed

\section{Concluding remarks}\label{section4}

\begin{enumerate}
    \item[(A)] Note that in Lemma \ref{lem7}, we might not have the simultaneous singular value decomposition for  a maximal chain $(A_1, \dots, A_n)$ in  $\boldsymbol{\Gamma}(\mathfrak A)$. For example, %from \eqref{eq1}, we have the following is a maximal chain. 
$$\begin{bmatrix}1&0&0&0\\ 0&0&0&0\\0&0&0&0\\0&0&0&0\end{bmatrix}^\bot\subsetneq \begin{bmatrix}1&0&0&0\\ 0&1&0&0\\0&0&0&1/2\\0&0&1/2&0 \end{bmatrix}^\bot\subsetneq\begin{bmatrix}1&0&0&0\\ 0&1&0&0\\0&0&1&0\\0&0&0&0\end{bmatrix}^\bot\subsetneq\begin{bmatrix}1&0&0&0\\ 0&1&0&0\\0&0&1&0\\0&0&0&1\end{bmatrix}^\bot.$$ %But neither these matrices nor there scalar multiple have a common simltaneuous singular value decomposition.

\item[(B)] The proof of Theorem \ref{thm:simple} also implies the following. In case of $\mathfrak A= \mathbb {\mathcal M}_n(\mathbb C)$, considered as a $C^*$-algebra over $\mathbb C$, the length of some (hence any) maximal chain in $\boldsymbol{\Gamma}(\mathfrak A)$ is $n$ and $\boldsymbol{\Gamma}(\mathfrak A)$ satisfies the property that for any maximal chain $(A_1,\dots, A_n)$ in $\boldsymbol{\Gamma}(\mathfrak A)$, the cardinality of $\{X\in \boldsymbol{\Gamma}(\mathfrak A);\;\; A_{n-1}^\bot\subseteq X^\bot\}$ is infinite.

\item[(C)]\label{remark:simple} Let $\mathfrak A$ be a finite dimensional complex $C^*$-algebra. Then, the following are equivalent: \begin{itemize} \item[(i)] $\mathfrak A$ is simple, \item[(ii)] the dimension of $\boldsymbol{\Gamma}(\mathfrak A)$ is a square of the length of the maximal chain.\end{itemize} 
\smallskip
To see this, embed $\mathfrak A= \mathbb {\mathcal M}_{n_1}(\mathbb C)\oplus\dots\oplus\mathbb {\mathcal M}_{n_\ell}(\mathbb C)$  into $\mathbb {\mathcal M}_N(\mathbb C)$, where $N=n_1+n_2+\dots+n_\ell$. As an application of Corollary \ref{cor5}, we get the length of maximal chain in $\mathfrak A$ is at most $N$. Let $e_1,\dots,e_N$ be standard unit vectors of $\mathbb C^n$. Let $P_k =\sum_{i=1}^k e_ie_i^*\in\mathfrak A$ for all $1\leq k\leq N$. Note that $P_k\in\mathfrak A$ and that the outgoing neighbourhoods of $P_k$ in $\mathfrak A$ increase with $k$ because  $P_{k+1}$  attains the norm at $e_{k+1}$ and so $P_k\in P_{k+1}^\perp\setminus P_k^\perp$. So, we get that the length of maximal chain in $\boldsymbol{\Gamma}(\mathfrak A)$ is $N$. The conclusion follows since the dimension of $\mathfrak A$ is $\sum_{j=1}^\ell n_j^2$, and $\sum_{j=1}^\ell n_j^2 = (n_1+n_2+\dots+n_\ell)^2$ if and only if $\ell=1$. 
 
\end{enumerate}

\end{document}